\documentclass[11pt]{article}

\usepackage[
    top=27mm,
    bottom=27mm,
    left=22mm,
    right=22mm
]{geometry}

\usepackage[square,comma,sort&compress,numbers]{natbib}

\usepackage{amsmath,amssymb,amsthm,mathrsfs}

\usepackage[normalem]{ulem}
\usepackage{xcolor}
\usepackage{hyperref}
\usepackage[capitalize]{cleveref}

\newtheorem{theorem}{Theorem}[section]
\newtheorem{lemma}[theorem]{Lemma}
\newtheorem{corollary}[theorem]{Corollary}
\newtheorem{claim}[theorem]{Claim}

\theoremstyle{definition}
\newtheorem{definition}[theorem]{Definition}
\newtheorem{construction}[theorem]{Construction}
\theoremstyle{plain}

\newcommand{\fr}{\frac}
\newcommand{\lc}{\lceil}
\newcommand{\rc}{\rceil}
\newcommand{\lf}{\lfloor}
\newcommand{\rf}{\rfloor}

\newcommand{\blue}{\textcolor{blue}}

\begin{document}

\title{Extremal problems for cancellative and locally thin hypergraphs}

\date{}

\makeatletter
\def\thanks#1{\protected@xdef\@thanks{\@thanks
        \protect\footnotetext{#1}}}
\makeatother

\author{Miao Liu\thanks{M. Liu is with the Research Center for Mathematics and Interdisciplinary Sciences, Shandong University, Qingdao, China. Email: liumiao10300403@163.com.}, Chong Shangguan\thanks{C. Shangguan is with the Research Center for Mathematics and Interdisciplinary Sciences, Shandong University, Qingdao 266237, China, and the Frontiers Science Center for Nonlinear Expectations, Ministry of Education, Qingdao 266237, China. Email: theoreming@163.com.}, 
and Chenyang Zhang \thanks{C. Zhang is with the Research Center for Mathematics and Interdisciplinary Sciences, Shandong University, Qingdao 266237, China. Email: lxzcyang@163.com.}
}

\maketitle

\begin{abstract}
We study Tur\'an-type extremal problems for cancellative and locally thin uniform hypergraphs. An $r$-uniform hypergraph is $t$-cancellative if $(\cup_{i=1}^t A_i)\cup B\ne (\cup_{i=1}^t A_i)\cup C$ whenever $A_1,\ldots,A_t,B,C$ are distinct edges. Let $C_t(n,r)$ denote the maximum number of edges in such a hypergraph on $n$ vertices. For all fixed integers $t,k\ge2$, we prove that
$C_{2(t-1)}(n,tk)=(1+o(1))\frac{\binom{n}{k}}{\binom{tk-1}{k-1}}$ as $n\to\infty$.
In the case $t=2$, this shows that F\"uredi's 2012 upper bound for $C_2(n,2k)$ is asymptotically sharp. The lower bound uses locally sparse induced packings, while the upper bound follows from double counting and a matching argument. 
   
More generally, for integers $s\ge t\ge1$, an $r$-uniform hypergraph is locally $(s,t)$-thin if among any $s$ distinct edges, at least $t$ contain a vertex that lies in none of the other $s-1$ edges. This notion includes cancellative hypergraphs as special cases. We establish general upper and lower bounds for the corresponding extremal numbers and determine their polynomial order of growth under suitable divisibility assumptions.
\end{abstract}

\section{Introduction}

An \emph{\(r\)-uniform hypergraph}, or simply an \emph{\(r\)-graph}, is a family of \(r\)-subsets of a finite vertex set. Given a family \(\mathscr L\) of \(r\)-graphs, let \(\mathrm{ex}_r(n,\mathscr L)\) denote the maximum number of edges in an \(n\)-vertex \(r\)-graph containing no copy of any member of \(\mathscr L\). Determining the asymptotic behaviour of such extremal functions is a central problem in extremal combinatorics \cite{keevash2011hypergraph}.

A basic problem is to determine the polynomial order of growth of this extremal function. For instance, one may seek an exponent satisfying \(\mathrm{ex}_r(n,\mathscr L)=\Theta(n^\alpha)\). A sharper problem is to determine whether there exists a positive constant satisfying \(\mathrm{ex}_r(n,\mathscr L)=(c+o(1))n^\alpha\).
Thus, even when the order of growth is known, the existence and the value of a leading asymptotic constant remain separate questions.

For example, it is known that \(\mathrm{ex}_2(n,\{C_3,C_4\})=\Theta(n^{3/2})\), while a conjecture of Erd\H{o}s from 1975 \cite{erdos1975some} asserts that \(\lim_{n\to\infty}\frac{\mathrm{ex}_2(n,\{C_3,C_4\})}{n^{3/2}}=\frac{1}{2\sqrt{2}}\), where \(C_\ell\) denotes a cycle of length \(\ell\). The corresponding questions for bipartite graphs are surveyed in \cite{Furedi-Simonovits-history}.
One standard hypergraph example is the \((v,e)\)-free problem. An \(r\)-graph is \emph{\((v,e)\)-free} if the union of every \(e\) distinct edges contains at least \(v+1\) vertices. For results on these problems, see, for example, \cite{Ruzsa-Szemeredi-63,Glock-triple,Shangguan-Tamo-degenerate,Furedi-Gerbner-without-exponents,Shangguan-degenerate-II,bes,Glock64,Letzter-Sgueglia-23,Glock-Kim-Lichev-Pikhurko-Sun-bes,Pikhurko-Sun-8,Wang-Zeng-even-BES}. 

In this paper, we study extremal problems for two closely related classes of hypergraphs: cancellative hypergraphs and locally thin hypergraphs. For integers $r\ge2$ and $s\ge t\ge1$, an $r$-graph $\mathcal H$ is called 
\begin{itemize}
    \item \emph{$t$-cancellative} if $(\bigcup_{i=1}^t A_i)\cup B\ne(\bigcup_{i=1}^t A_i)\cup C$ for any $t+2$ distinct edges $A_1,\ldots,A_t,B,C\in\mathcal H$;
    \item \emph{locally $(s,t)$-thin} if, among any $s$ distinct edges, at least $t$ contain a vertex that belongs to none of the other $s-1$ edges.
\end{itemize}
These notions are closely related: $t$-cancellative hypergraphs are precisely locally $(t+2,t+1)$-thin hypergraphs \cite{Furedi-2-canc}.

Let $C_t(n,r)$ and $L_{(s,t)}(n,r)$ denote the maximum number of edges in an $n$-vertex $t$-cancellative $r$-graph and in an $n$-vertex locally $(s,t)$-thin $r$-graph, respectively. We determine the leading asymptotic constant of $C_{2(t-1)}(n,tk)$ for all fixed integers $t,k\ge2$. We also establish general upper and lower bounds for $L_{(s,t)}(n,r)$ and determine its polynomial order of growth under suitable divisibility assumptions.

\subsection{Cancellative $r$-graphs}

The case \(t=1\) is the classical notion of cancellativity, introduced by Erd\H{o}s and Katona \cite{Erdos-Katona-1-canc}. When $r=2$, the $1$-cancellative condition is equivalent to being triangle-free. Mantel's theorem \cite{Mantel-K_3} gives $C_1(n,2)=\lfloor \frac{n^2}{4} \rfloor$. More generally, every complete $r$-partite $r$-graph is cancellative, so 
$$C_1(n,r)\ge p(n,r):=\Bigl\lfloor \frac{n}{r}\Bigr\rfloor
\Bigl\lfloor \frac{n+1}{r}\Bigr\rfloor \cdots
\Bigl\lfloor \frac{n+r-1}{r}\Bigr\rfloor,$$
where $p(n,r)$ is the number of edges of the balanced complete $r$-partite $r$-graph on $n$ vertices.

Katona \cite{Erdos-Katona-1-canc} posed the problem of determining the maximum size of a cancellative $3$-graph on $n$ vertices and conjectured that the extremal construction is the balanced complete tripartite $3$-graph. This conjecture was later proved by Bollob\'as \cite{bollobas1974three}. Bollob\'as further conjectured that $C_1(n,r)=p(n,r)$ for all $r\ge4$. This was proved for $r=4$ by Sidorenko \cite{sidorenko1987maximal} and for $r\le n\le2r$ by Frankl and F\"uredi \cite{Frankl-Furedi-1-canc-upp}, but later Shearer \cite{shearer1996new} provided counterexamples for sufficiently large $r$. Currently, the best known lower and upper bounds in \cite{tolhuizen-1-canc-low,Frankl-Furedi-1-canc-upp} give that
$$\fr{0.28}{2^r}\binom{n}{r}<C_1(n,r)\le\fr{2^r}{\binom{2r}{r}}\binom{n}{r}.$$

K\"orner and Sinaimeri \cite{Korner-Sinaimeri-2-canc} studied the case $t=2$. F\"uredi \cite{Furedi-2-canc} subsequently considered $t$-cancellative hypergraphs for general $t$; in particular, he showed that $C_2(n,4)=\frac{n^2}{6}+o(n^2)$, and more generally,
\begin{align}\label{eq:2-canc}
    \frac{n^k}{(2k)^k}-o(n^k)\le C_2(n,2k)\le\frac{\binom{n}{k}}{\binom{2k-1}{k-1}}.
\end{align}
\noindent It is also known that $n^{k-o(1)}<C_2(n,2k-1)=O(n^{k})$; see \cite{Shangguan-Tamo-canc-and-union-free,Furedi-2-canc}. 

Shangguan and Tamo \cite{Shangguan-Tamo-canc-and-union-free} obtained general lower and upper bounds for $C_t(n,r)$ and proved for all fixed integers $t,k\ge2$, $C_{2(t-1)}(n,tk)=\Theta(n^k)$. This leaves the leading asymptotic constant undetermined. Within this family of parameters, the constant had previously been determined only for $C_2(n,4)$, by F\"uredi \cite{Furedi-2-canc}. Our first theorem determines it for
all fixed $t,k\ge2$.

\begin{theorem}\label{thm:main-canc}
    For all fixed integers $t\ge 2$ and $k\ge 2$, we have
    \begin{align}\label{eq:canc}
        C_{2(t-1)}(n,tk)=(1+o(1))\frac{\binom{n}{k}}{\binom{tk-1}{k-1}}, \qquad \text{as}~n\to\infty,
    \end{align}
\end{theorem}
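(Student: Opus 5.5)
The plan is to prove matching upper and lower bounds. The upper bound comes from a double-counting argument over $k$-subsets, organised by Baranyai matchings. The lower bound comes from packing small extremal gadgets with a conflict-free (locally sparse) packing theorem. I have not closed the key cancellative step in the upper bound (see the second paragraph).

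\emph{Upper bound.} Let $\mathcal H$ be a $2(t-1)$-cancellative $tk$-graph on $[n]$. Call a $k$-subset $S\subseteq A\in\mathcal H$ \emph{own} for $A$ if no other edge of $\mathcal H$ contains $S$. Each $k$-set is own for at most one edge. So if every edge had at least $\binom{tk-1}{k-1}=\binom{tk}{k}/t$ own $k$-subsets, we would get $|\mathcal H|\le \binom{n}{k}/\binom{tk-1}{k-1}$ exactly, matching \eqref{eq:2-canc} for $t=2$. To lower-bound the number of own $k$-sets of a fixed edge $A$, use Baranyai's theorem: since $k\mid tk$, the family $\binom{A}{k}$ splits into $\binom{tk-1}{k-1}$ perfect matchings, each a partition of $A$ into $t$ blocks of size $k$. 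Hence it suffices to prove the following.
\begin{quote}
(Key claim) For every edge $A$ and every partition $A=B_1\cup\dots\cup B_t$ into $k$-sets, some $B_i$ is own for $A$.
\end{quote}

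To prove the claim, suppose instead that $B_i\subseteq E_i$ with $E_i\ne A$ for all $i$, so that $A\subseteq E_1\cup\dots\cup E_t$. The aim is to produce $2(t-1)$ auxiliary edges $A_1,\dots,A_{2(t-1)}$ and two further edges $B\ne C$, all distinct, with $(\bigcup A_j)\cup B=(\bigcup A_j)\cup C$; this contradicts $2(t-1)$-cancellativity. The natural first attempt is $B=A$, taking $C$ to be one of the $E_i$, with the $A_j$ consisting of the remaining $E_i$ together with extra edges that cover what $C$ adds outside the union. The budget $2(t-1)$ suggests exactly $t-1$ edges covering $A\setminus C$ and $t-1$ edges covering $C\setminus A$. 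Covering $C\setminus A$ requires using the same hypothesis on a second edge, namely that $C$ also has a non-own partition. This is the step I expect to be the main obstacle. I also have to deal with coincidences among the $E_i$, and possibly replace ``own'' by ``contained in at most a bounded number of other edges''.

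If the exact claim fails, I would fall back on an approximate version. This version says that edges with fewer than $\binom{tk-1}{k-1}$ own $k$-sets contribute only $o(n^k)$ edges in total. It would be proved by a degree/removal argument on the $k$-shadow: a $k$-set lying in many edges yields many almost-disjoint edges, which can then be used to build the cancellative configuration. That weaker statement still gives the $(1+o(1))$ upper bound.

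\emph{Lower bound.} The extremal structure should have every edge owning exactly one block of each Baranyai matching, with the other blocks shared. I would first build a fixed gadget: a $2(t-1)$-cancellative $tk$-graph $F$ on $m$ vertices (with $m$ large but fixed) whose edges cover the $k$-sets of $K_m^{(k)}$ ``efficiently''. Concretely, I want $|F|\ge(1-\varepsilon)\binom{m}{k}/\binom{tk-1}{k-1}$, with every $k$-subset of $[m]$ lying in $F$-edges only in a controlled way. Then I would take an almost-perfect packing of copies of $K_m^{(k)}$ into $K_n^{(k)}$ that is locally sparse and induced: no few copies together span too many vertices relative to their number of $k$-sets. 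Such packings follow from the conflict-free hypergraph matching theorems (Glock--Joos--Kim--K\"uhn--Lichev, Delcourt--Postle). Placing a copy of $F$ on each packed copy gives $(1-2\varepsilon)\binom{n}{k}/\binom{tk-1}{k-1}$ edges.

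It remains to verify cancellativity. A violating configuration of $2(t-1)+2$ edges spanning several gadgets would force a small, dense union of gadget copies, and local sparseness forbids this; configurations inside one gadget are excluded because $F$ itself is cancellative. The gadget construction is the secondary difficulty. I would try to take $F$ with vertex set $[m]$ partitioned into groups of size $k$, using algebraic (MDS/Reed--Solomon-type) codes so that any $t$ groups determine few edges.
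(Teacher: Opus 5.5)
Both halves of your proposal have a genuine gap.

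\textbf{Upper bound.} Your key claim is false, and in fact no per-edge lower bound on own $k$-sets can hold. Take an edge $A$ and, for every $T\in\binom{A}{k}$, an edge $E_T=T\cup X_T$, where the $X_T$ are pairwise disjoint $(t-1)k$-sets outside $A$. The symmetric difference of any two distinct edges contains some $X_T$, and $X_T$ lies in no third edge. So this family is $s$-cancellative for every $s$, yet no $k$-subset of $A$ is own, and every Baranyai partition of $A$ violates the claim.

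The same gadget satisfies the hypotheses of the shadow-packing construction described below. That construction then gives a $2(t-1)$-cancellative $tk$-graph with $\Theta(n^k)$ edges having no own $k$-set. So your fallback is false as stated. A degree/removal argument cannot rescue it either, since every $k$-set here has degree at most $2$.

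The deficit at $A$ is paid for by the surplus of own sets at the $E_T$. What you need is therefore a global charging argument, with the local statement's quantifiers reversed:
\begin{itemize}
\item Fix a non-own $k$-set $T$ and all edges $F_1,\dots,F_s\supseteq T$. Suppose two of them, say $F_1,F_2$, each had $F_i\setminus T$ partitioned into $t-1$ non-own $k$-sets. Pick a second edge through each block not already contained in the other $F_j$. This gives at most $2(t-1)$ edges, none equal to $F_1$ or $F_2$, whose union contains $F_1\triangle F_2$ (pad to exactly $2(t-1)$ using $|\mathcal H|\ge 2t$). This is your ``$t-1$ plus $t-1$'' budget, and it closes precisely because the two edges share the block $T$.
\item Hence at least $s-1$ of the $F_i$ contain no $t-1$ pairwise disjoint non-own $k$-subsets of $F_i\setminus T$. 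Kleitman's theorem $m((t-1)k,k,t-2)=\binom{(t-1)k-1}{k}$ then gives at least $\binom{(t-1)k-1}{k-1}$ own $k$-sets inside each such $F_i\setminus T$.
\item Double count pairs $(T,T')$ with $T$ non-own, $T'$ own, $T\cap T'=\emptyset$ and $T\cup T'$ inside an edge. This gives $\sum_{s\ge2}(s-1)d_s\le(t-1)d_1$, where $d_s$ counts $k$-sets of degree $s$.
\item Combining with $\binom{tk}{k}|\mathcal H|=\sum_{s\ge1}s\,d_s$ and $\sum_{s\ge1}d_s\le\binom nk$ yields $|\mathcal H|\le\binom nk/\binom{tk-1}{k-1}$.
\end{itemize}

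\textbf{Lower bound.} Packing copies of $K_m^{(k)}$ requires a $2(t-1)$-cancellative gadget on $[m]$ with $|F|\ge(1-\varepsilon)\binom mk/\binom{tk-1}{k-1}$. That is the lower bound itself at $n=m$: the packing only transfers a density you must already have, so the reduction is circular. The MDS/Reed--Solomon idea is only a suggestion, with no argument that it attains this constant.

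The missing idea is to pack copies of the $k$-shadow $\sigma_k(F)$ instead, using an induced, $2t$-locally sparse, near-optimal packing (which the conflict-free matching theorems you cite do provide). The result has about $\binom nk|F|/|\sigma_k(F)|$ edges, so only this ratio matters, and a very sparse gadget achieves $(1-\varepsilon)/\binom{tk-1}{k-1}$:
\begin{itemize}
\item Take a $(tk-1)$-graph on $q$ vertices with $N\ge cq^{k+1/(2t-1)}$ edges that is $(\ell(tk-1)-(\ell-1)k-1,\ell)$-free for $2\le\ell\le 2t$ (by random deletion).
\item Add a private vertex to each edge.
\item Each edge then has exactly $\binom{tk-1}{k-1}$ shadow sets through its private vertex. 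The core contributes at most $\binom qk=o(N)$ further shadow sets. The private vertices make $F$ cancellative of every order.
\end{itemize}

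Your cancellativity check is also incomplete. When $B,C$ lie in the same copy and some auxiliary edges come from other copies, cancellativity of $F$ plus sparseness of the copies does not suffice. You need two further facts:
\begin{itemize}
\item an edge meets the vertex set of any other copy in at most $k-1$ vertices (inducedness);
\item $F$ is $(\ell tk-(\ell-1)k-1,\ell)$-free for $\ell\le 2t$, and local sparsity lifts this freeness to the whole construction.
\end{itemize}
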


For $t=2$, this shows that the upper bound in \eqref{eq:2-canc} is asymptotically sharp for every $k\ge2$.

\paragraph{Proof ideas and relation to earlier work.} The proof of \cref{thm:main-canc} is divided into two parts. For the lower bound, we first construct a suitable $2(t-1)$-cancellative $tk$-graph $\mathcal{F}$, and then combine many copies of it on a packing of its $k$-shadow (see \cref{construction}). The main difficulty is to ensure that the resulting hypergraph remains $2(t-1)$-cancellative.  An ordinary packing does not guarantee this, since edges coming from several different copies may together form a forbidden configuration for cancellativity. We overcome this difficulty by using  an induced and locally sparse packing (see \cref{def:packing,lem:conflict-free-matching}). The induced packing condition controls pairwise interactions between copies, while local sparsity controls configurations involving several copies. These properties are combined in \cref{lem:canc-H(F)}, and the resulting construction matches the leading constant in the upper bound. 

For the upper bound, we distinguish between the $k$-sets contained in
exactly one edge and those contained in at least two edges. For a $k$-set
of the latter type, cancellativity imposes a matching-number restriction
in all but at most one of the edges containing it (see \cref{claim:uf-UB}). Kleitman's bound in \cref{lem:matching-conjecture} then yields sufficiently many $k$-sets of the former type. A double-counting argument gives the desired upper bound.

\paragraph{Related work.} An earlier version of \cref{thm:main-canc} appeared in the preprint~\cite{liu2024cancellative-union-free}, which considered both cancellative and union-free hypergraphs. As the results on both problems were subsequently expanded substantially, that preprint was divided into two separate papers. The union-free results appear in~\cite{liu2026union-free}. The present paper develops the cancellative part and, in addition, establishes new results for the broader class of locally $(s,t)$-thin uniform hypergraphs discussed in the next subsection. 

\subsection{Locally $(s,t)$-thin}

We next recall the relevant background on locally thin families. The case $(s,t)=(4,1)$ was studied by Alon, K\"orner, and Monti \cite{alon2000string}, and locally $(s,1)$-thin families for general $s$ were subsequently considered by Alon, Fachini, and K\"orner \cite{alon2000locally}. Most previous work concerns non-uniform set systems and their exponential growth rates \cite{Fachini-better-locally,Furedi-2-canc}.

In the uniform setting, locally thin families include two classical notions in extremal set theory: cancellative families and cover-free families. Following Erd\H{o}s, Frankl, and F\"uredi \cite{Erdos-Frankl-Furedi-2-cover-free,Erdos-Frankl-Furedi-r-cover-free}, a family is called \emph{$t$-cover-free} if for any distinct members $A_0,A_1,\ldots,A_t$, one has
$A_0 \nsubseteq \bigcup_{i=1}^t A_i$. F\"uredi \cite{Furedi-2-canc} observed that $t$-cancellative families are precisely locally $(t+2,t+1)$-thin families, while $t$-cover-free families are precisely locally $(t+1,t+1)$-thin families. Moreover, Frankl and F\"{u}redi \cite{Frankl-Furedi-colored-packing} determined the leading asymptotic constant for $t$-cover-free $r$-graphs, or equivalently, for locally $(t+1,t+1)$-thin $r$-graphs. In particular, for fixed $r$ and $s\ge2$,
$$L_{(s,s)}(n,r)=(\gamma(r,s-1)+o(1))n^{\lceil r/(s-1)\rceil},$$
for a positive constant $\gamma(r,s-1)$ determined in \cite{Frankl-Furedi-colored-packing}. 

There is an equivalent matrix formulation. The incidence matrix of an $r$-graph has one row for each vertex, one column for each edge, and constant column weight $r$. The $r$-graph is locally $(s,t)$-thin if and only if every submatrix formed by $s$ columns contains at least $t$ distinct rows of the $s\times s$ identity matrix. Thus locally $(s,t)$-thin $r$-graphs are precisely \emph{constant-column-weight selectors} with distinct columns. Fiore, Dalai, and Vaccaro \cite{DellaFiore-Dalai-Vaccaro-selectors} considered such selectors with an additional runlength constraint on each column. Under the correspondence $k=s$, $p=t$, and $w=r$, their Theorem~III.3 would imply the following bound for $s>t$ (after repeated columns are deleted):
$$L_{(s,t)}(n,r)=\Omega\left(n^{r(s-t+1)/(s-1)}\right).$$ 
However, this lower bound is false for a broad range of parameters.
For instance, when $(s,t)=(4,3)$ and $r$ is even, it predicts the
exponent $2r/3$, whereas locally $(4,3)$-thin $r$-graphs are precisely
$2$-cancellative $r$-graphs, whose exponent is known to be $r/2$;
see~\eqref{eq:2-canc}. Hence the predicted exponent is incorrect for
every even $r\ge4$. The error originates in equation~(17) of
\cite{DellaFiore-Dalai-Vaccaro-selectors}, which underestimates the
bad-event probability by ignoring the strong dependence among the bad
columns. 

We next give general bounds for $L_{(s,t)}(n,r)$ when $s>t$.

\begin{theorem}\label{thm:locally-thin-lower}
    Let $s>t\ge1$ and $r\ge2$ be integers. Then $L_{(s,t)}(n,r)=\Omega(n^h)$, where
    $$h=\max_{\substack{x\in\mathbb Z\\0\le x\le (s-t+1)r-1}}
    \min\left\{\frac{(s-t+1)r-x}{s-1},\frac{x+1}{s-t}\right\}.$$
\end{theorem}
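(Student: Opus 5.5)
Fix an integer $x$ with $0\le x\le (s-t+1)r-1$. We will show $L_{(s,t)}(n,r)=\Omega\bigl(n^{\min\{((s-t+1)r-x)/(s-1),\,(x+1)/(s-t)\}}\bigr)$ and then maximize over $x$. The argument is probabilistic deletion: choose a random $r$-graph $\mathcal H$ with each $r$-set kept independently with probability $p$, and then remove edges to destroy every bad configuration. A bad configuration is a set of $s$ distinct edges of which at least $s-t+1$ have no private vertex, i.e.\ each of those edges is covered by the union of the other $s-1$.

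The first step is to reduce to a minimal obstruction. If $s$ edges violate thinness, fix a set $\{B_1,\dots,B_{s-t+1}\}$ of $s-t+1$ edges with no private vertex, together with the other $t-1$ edges $A_1,\dots,A_{t-1}$. Let $U=B_1\cup\cdots\cup B_{s-t+1}$ and $y=|U|$, so $y\le (s-t+1)r$. Each $B_j$ is covered by the remaining $s-1$ edges. Hence every vertex of $U$ that lies in only one $B_j$ must be covered by some $A_i$. Moreover, every $A_i$ can be replaced by its intersection pattern with $U$.

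Next we split into two cases according to $y$.

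Case 1: $y\ge (s-t+1)r-x$. Then $U$ is large, and the $A_i$'s must cover many vertices of $U$. Counting the configuration by its vertex set, it spans at most $y+(t-1)r-(\text{overlaps})$ vertices and has $s$ edges. The relevant count is
\[
n^{v}p^{s},
\]
where $v$ is the number of vertices. The key claim to verify is that in this regime $v\le r(s-1)-\bigl((s-t+1)r-x\bigr)+\ldots$. More precisely, the expected number of such configurations compared with $pn^r$ must be $o(pn^r)$ provided $p\le n^{-\alpha}$ with $pn^r\approx n^{((s-t+1)r-x)/(s-1)}$. I expect the cleanest path is to bound this via a $(v,e)$-free style condition: require that any $s-1$ edges span more than $(s-1)r-((s-t+1)r-x)$ vertices.

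Case 2: $y\le (s-t+1)r-x-1$. Here the $s-t+1$ edges $B_j$ themselves are dense: $s-t+1$ edges on at most $(s-t+1)r-x-1$ vertices. This is a $(v,e)$-type configuration, so forbidding any $s-t+1$ edges on at most $(s-t+1)r-x-1$ vertices handles it. The standard deletion calculation then gives density $n^{(x+1)/(s-t)}$. Indeed, $n^{(s-t+1)r-x-1}p^{s-t+1}\ll pn^r$ when $p\approx n^{-(r-(x+1)/(s-t))}$.

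The construction is therefore: take $\mathcal H$ random and delete one edge from each copy of each forbidden sparse configuration of the two types. This is the standard alteration argument yielding $\Omega(n^h)$ edges for the optimal $x$. The main obstacle is Case 1. There one must show that a bad $s$-tuple in which the $B_j$'s span many vertices forces the whole $s$-tuple (or some $s-1$ of its edges) to span few vertices. This requires carefully accounting for vertices of $U$ lying in exactly one $B_j$, which must be covered by the $A_i$'s, and for how much of each $A_i$ is spent on this covering. My first attempt would be to assign each vertex of $U$ a multiplicity $\ge2$ across all $s$ edges. This gives $|\bigcup \text{edges}|\le sr-|U|$-type bounds, whereas the actual difficulty is getting $s-1$ in the denominator.
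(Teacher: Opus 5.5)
\textbf{Gap in Case~1.} Your architecture matches the paper's: the alteration lemma (Lemma~\ref{lem:e^--free}), and a split of a bad $s$-tuple by the size of $U$. Your Case~2 is exactly the paper's $((s-t+1)r-x-1,\,s-t+1)$-freeness condition, with exponent $(x+1)/(s-t)$. The gap is Case~1. You leave it open, and the condition you propose there is wrong.

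Forbidding $s-1$ edges on at most $(s-1)r-((s-t+1)r-x)=(t-2)r+x$ vertices has deletion exponent $((s-t+1)r-x)/(s-2)$. Together with your Case~2 condition, it does not imply thinness. Take $(s,t)=(4,3)$, $r=4$, $x=2$: your two conditions become $(6,3)$-freeness and $(5,2)$-freeness, both with exponent $3$. Lemma~\ref{lem:e^--free} would then give a $2$-cancellative $4$-graph with $\Omega(n^3)$ edges, contradicting $C_2(n,4)=O(n^2)$. The error is an off-by-one: the bad configuration has $s$ edges, not $s-1$, so its vertex count must be compared with $sr$, not $(s-1)r$.

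\textbf{The missing step.} It is the observation in your last sentence, which you set aside. Every vertex of $U$ lies in some $B_j$, and $B_j$ has no private vertex, so that vertex has degree at least $2$ among the $s$ edges. Let $W$ be the union of all $s$ edges. Counting incidences gives $sr\ge 2|U|+\bigl(|W|-|U|\bigr)$, that is, $|W|\le sr-|U|$. In Case~1 this yields
\[
|W|\le sr-\bigl((s-t+1)r-x\bigr)=(t-1)r+x,
\]
so the $s$ edges form a $((t-1)r+x,\,s)$-configuration. Forbidding these is a single $(v,e)$-condition with $e=s$. Its exponent is
\[
\frac{sr-v}{s-1}=\frac{(s-t+1)r-x}{s-1},
\]
so the denominator $s-1=e-1$ appears automatically, and your concern about it is unfounded.

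With this step, your argument is the paper's proof of Lemma~\ref{lem:sparse-locally-thin} followed by Lemma~\ref{lem:e^--free}. The paper reaches the same bound $|W|\le (t-1)r+|V_{\ge2}|\le sr-|Y|$ by splitting the vertices of the $B$-subhypergraph into those of degree one and those of degree at least two. Two details remain. Discard any condition whose vertex bound is at most $r$, since it holds automatically. Treat $s=2$ separately, because there both conditions may be discarded and Lemma~\ref{lem:e^--free} needs at least one condition.
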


\begin{theorem}\label{thm:locally-thin-upper}
    Let $s>t\ge1$ and $r\ge2$ be integers, and set $\ell=\left\lceil r/\left\lfloor (2s-t-1)/(s-t+1)\right\rfloor\right\rceil$. Then
    $$L_{(s,t)}(n,r)\le (s-1)\binom{n}{\ell}+s-1.$$
\end{theorem}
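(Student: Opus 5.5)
The plan is a Frankl--F\"uredi-type ``own $\ell$-subset'' counting argument. Write $m=\lfloor (2s-t-1)/(s-t+1)\rfloor$, $q=s-t+1$, and $\ell=\lceil r/m\rceil$.

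Call an edge $A$ of a locally $(s,t)$-thin $r$-graph $\mathcal H$ \emph{good} if some $\ell$-subset $S\subseteq A$ lies in at most $s-1$ edges of $\mathcal H$. Otherwise call $A$ \emph{bad}, so every $\ell$-subset of $A$ lies in at least $s-1$ other edges. Assign to each good edge one such witness $S$. Each $\ell$-set is the witness of at most $s-1$ edges, so the number of good edges is at most $(s-1)\binom{n}{\ell}$. It therefore suffices to show that $\mathcal H$ has at most $s-1$ bad edges. We may also assume $|\mathcal H|\ge s$, since otherwise the bound is trivial.

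For a bad edge $A$, split $A$ into $m$ parts $P_1,\dots,P_m$ of size at most $\ell$; this is possible because $m\ell\ge r$. Pad each $P_i$ to an $\ell$-subset of $A$. Then each part lies in at least $s-1$ edges other than $A$, so we have at least $s-1$ choices for an edge $B_i\supseteq P_i$, $B_i\ne A$. If $A$ and all its chosen covers $B_1,\dots,B_m$ lie in a common $s$-set $\mathcal S$, then $A\subseteq\bigcup_i B_i$, so $A$ has no vertex private within $\mathcal S$.

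Now suppose there are at least $q$ bad edges $A_1,\dots,A_q$. We want an $s$-subfamily $\mathcal S\supseteq\{A_1,\dots,A_q\}$ in which every $A_j$ is covered by its other members. Then at most $s-q=t-1$ members of $\mathcal S$ have a private vertex, contradicting local $(s,t)$-thinness. If $\mathcal S$ ends up with fewer than $s$ members we pad it with arbitrary further edges; adding edges never creates private vertices, which is where $|\mathcal H|\ge s$ is used.

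The key step, and the main obstacle, is the budget: at most $s-q=t-1$ edges may be added as covers. A naive count is far too wasteful, since it allows $qm$ new cover edges and $qm$ may be as large as $2s-t-1$. The defining inequality $qm\le (s-1)+(q-1)$ suggests that the covers should overlap heavily. I would first try to choose covers preferentially among edges already in the family, namely the other bad edges and previously chosen covers, using that each part has at least $s-1$ candidate edges. When the family currently has fewer than $s$ members, some candidate lies outside it and can be added; when it has $s$ members, we need to show some part can still be covered inside it.

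If this greedy approach fails, I would instead redefine ``bad'' iteratively: peel off good edges one at a time, so that the witnesses stay valid, and apply a pigeonhole or double-counting argument to the incidences between the $qm$ parts and the $s-1$ available slots. The goal is to show that some $s$-set absorbs all the covers, with the slack $q-1$ in $qm\le(s-1)+(q-1)$ accounting for bad edges covering one another. Once at most $s-1$ bad edges remain, the two counts combine to give $L_{(s,t)}(n,r)\le(s-1)\binom n\ell+s-1$.
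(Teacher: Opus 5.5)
There is a genuine gap, and it starts with your reduction: the claim that a locally $(s,t)$-thin $\mathcal H$ has at most $s-1$ bad edges (bad with respect to $\mathcal H$ itself) is false.

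\textbf{Counterexample.} Take $(s,t)=(4,3)$ and $r=4$, so $m=q=\ell=2$. A gadget consists of an edge $A=\{a,b,c,d\}$ together with, for each of the six pairs $P\subseteq A$, three edges $P\cup\{x,y\}$ on fresh vertices $x,y$. Take many vertex-disjoint gadgets.
\begin{itemize}
\item Every pair of every $A$ lies in exactly $4=s$ edges, so every $A$ is bad.
\item Every other edge contains a vertex lying in no other edge, so among any $4$ edges only $A$'s can lack a private vertex.
\item Two $A$'s from different gadgets would each need at least two covering edges from their own gadget, i.e.\ at least $6>4$ edges in total.
\end{itemize}
So this hypergraph is locally $(4,3)$-thin with unboundedly many bad edges. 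It also shows why your next step cannot work as stated. For prescribed bad edges $A_1,\dots,A_q$ (here in disjoint components) the covers cannot be shared, so no $s$-subfamily removes all their private vertices. The underlying problem is that the covers of a bad edge may be good edges, so bad edges need not cover one another.

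\textbf{What the fallback is missing.} Your iterative peeling is the right reduction, and it is exactly what the paper does. Repeatedly delete all edges through an $\ell$-set of positive degree less than $s$. This loses at most $(s-1)\binom n\ell$ edges and leaves $\mathcal H'$ in which every $\ell$-set has degree $0$ or at least $s$; now every cover is itself an edge with this property. But what you defer to ``pigeonhole or double counting'' is the whole argument. The missing idea is that the $q$ edges must be built as a chain, not given in advance:
\begin{itemize}
\item take $B_1\in\mathcal H'$ and an $\ell$-set $T_1\subseteq B_1$, then a new edge $B_2\supseteq T_1$;
\item take an $\ell$-set $T_2\subseteq B_2$ disjoint from $T_1$ (possible when $r\ge 2\ell$), then a new edge $B_3\supseteq T_2$;
\item continue up to $B_q$.
\end{itemize}
Consecutive chain edges then cover each other's $T_i$. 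So the degree-one vertices of $B_1$ and $B_q$ need at most $\lceil r/\ell\rceil-1$ further $\ell$-subsets each, and those of an interior $B_i$ at most $\lceil r/\ell\rceil-2$. Cover these greedily by distinct new edges; this is possible because each $\ell$-set has degree at least $s$ and fewer than $s$ edges are in use. The total is at most $q\lceil r/\ell\rceil-(q-2)\le qm-q+2\le s$ edges, which is precisely your inequality $qm\le(s-1)+(q-1)$.

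The remaining case $r<2\ell$ needs a different choice; for $t\ge 3$ it occurs only when $m=2$ and $r$ is odd. Take $T_i\supseteq B_i\setminus T_{i-1}$, so that interior chain edges are covered entirely by their neighbours. Only the two ends then need one extra cover each, giving $q+2\le s$ edges. The case $t\le 2$ is trivial, since then $\ell=r$.
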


The following corollary is a direct consequence of Theorems \ref{thm:locally-thin-lower} and \ref{thm:locally-thin-upper}. 

\begin{corollary}\label{cor:locally-thin}
    Let $s>t\ge1$ and $r\ge2$ be integers, and set $d=(2s-t-1)/(s-t+1)$. If $d$ is an integer and $d\mid r$, then
    $$L_{(s,t)}(n,r)=\Theta\left(n^{r/d}\right)
    =\Theta\left(n^{(s-t+1)r/(2s-t-1)}\right).$$
\end{corollary}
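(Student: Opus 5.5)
The corollary follows by combining the two preceding theorems: the upper bound comes directly from \cref{thm:locally-thin-upper}, and the lower bound from \cref{thm:locally-thin-lower} with a single explicit choice of $x$.

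\emph{Upper bound.} If $d=(2s-t-1)/(s-t+1)$ is an integer, then $\lfloor (2s-t-1)/(s-t+1)\rfloor=d$. Since $d\mid r$, the parameter in \cref{thm:locally-thin-upper} is $\ell=\lceil r/d\rceil=r/d$. Hence
\[
L_{(s,t)}(n,r)\le (s-1)\binom{n}{r/d}+s-1=O\bigl(n^{r/d}\bigr).
\]
Note that $s>t$ gives $d\ge 1$, so $r/d$ is a positive integer.

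\emph{Lower bound.} We need some integer $x$ with $0\le x\le (s-t+1)r-1$ for which both terms in the minimum of \cref{thm:locally-thin-lower} are at least $r/d$.

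The natural candidate balances the two terms. Set $(x+1)/(s-t)=r/d$, that is,
\[
x=\frac{(s-t)r}{d}-1.
\]
This is an integer because $d\mid r$. It lies in the allowed range: $x\ge 0$ since $s-t\ge1$ and $r/d\ge1$, and $x\le (s-t+1)r-1$ trivially.

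With this $x$ the second term equals $r/d$. For the first term, write $u=s-t+1$, so that $d=(2s-t-1)/u$ and $2s-t-1=du$. Then
\[
\frac{ur-x}{s-1}=\frac{ur-(u-1)r/d+1}{s-1}=\frac{r\bigl(ud-u+1\bigr)/d+1}{s-1}.
\]
Now $ud-u+1=(2s-t-1)-(s-t+1)+1=s-1$, so the first term equals $r/d+1/(s-1)>r/d$.

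Therefore $h\ge r/d$, and \cref{thm:locally-thin-lower} gives $L_{(s,t)}(n,r)=\Omega(n^{r/d})$.

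Combining the two bounds yields $\Theta(n^{r/d})$. Finally, $r/d=(s-t+1)r/(2s-t-1)$ by the definition of $d$.

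The only step needing care is choosing $x$ so that it is an integer and lies in the allowed range; the divisibility assumption $d\mid r$ is exactly what makes this work. No substantive obstacle is expected.
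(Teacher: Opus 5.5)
Your proof is correct and follows the paper's argument: you get the upper bound from $\ell=r/d$ in \cref{thm:locally-thin-upper}, and the lower bound by exhibiting an admissible $x$ in \cref{thm:locally-thin-lower} with $h\ge r/d$. The only difference is the choice of $x$. The paper takes $x=(s-t)r/d$, which makes the first term exactly $r/d$ and the second $r/d+1/(s-t)$. Your $x=(s-t)r/d-1$ makes the second term exactly $r/d$ and the first $r/d+1/(s-1)$, and both choices are valid.
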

Indeed, the assumptions give $\ell=r/d$ in Theorem~\ref{thm:locally-thin-upper}, while choosing $x=(s-t)r/d$ in Theorem~\ref{thm:locally-thin-lower} gives the matching lower exponent.

\paragraph{Organisation.} \Cref{sec:definitions-lemmas} contains the preliminary definitions and lemmas. The cancellative and locally thin results are proved in \cref{sec:thm:main-canc,sec:locally-thin}, respectively.

\section{Preliminaries}\label{sec:definitions-lemmas}

Unless stated otherwise, all parameters other than $n$ are fixed, and all asymptotic statements are taken as $n\to\infty$. We write $[n]:=\{1,\ldots,n\}$ and, for a finite set $V$, let $\binom{V}{r}$ denote the family of all $r$-subsets of $V$. For a hypergraph $\mathcal H$, let $V(\mathcal H)$ denote its vertex set. For sets $A$ and $B$, their \emph{symmetric difference} is $A\Delta B:=(A\setminus B)\cup(B\setminus A)$.

A collection of $e$ distinct edges whose union has size at most $v$ is called a \emph{$(v,e)$-configuration}.

We use the following standard probabilistic bound; see Lemma~1.7 of \cite{Furedi-Ruszinko-no-grid} and Proposition~6 of \cite{Shangguan-Tamo-spa-hyp-coding}.

\begin{lemma}\label{lem:e^--free}
    Let $r\ge2$ and $a\ge1$ be integers. For each $i\in[a]$, let $(v_i,e_i)$ be a pair of integers satisfying $v_i\ge r+1$ and $e_i\ge2$, and set $h:=\min_{i\in[a]}(e_ir-v_i)/(e_i-1)$. Then there exist constants $c>0$ and $n_0$ such that, for every integer $n\ge n_0$, there is an $n$-vertex $r$-graph with at least $cn^h$ edges that is $(v_i,e_i)$-free for every $i\in[a]$.
\end{lemma}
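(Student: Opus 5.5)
The plan is the standard alteration (deletion) method. We take a random $r$-graph on $[n]$ whose edge density is tuned so that the expected number of edges is of order $n^h$. For each $i$, the expected number of $(v_i,e_i)$-configurations will then be only a small fraction of this. Deleting one edge from every such configuration leaves an $r$-graph that is $(v_i,e_i)$-free for all $i$ and still has $\Omega(n^h)$ edges.

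First we dispose of degenerate cases and check that $h<r$. If $h\le 0$, a single edge suffices: an $r$-graph with one edge has no $e_i\ge2$ distinct edges, so it is trivially $(v_i,e_i)$-free, and $1\ge cn^h$ for $c\le1$. So assume $h>0$. Since $v_i\ge r+1$, we have
$$\frac{e_ir-v_i}{e_i-1}\le \frac{e_ir-r-1}{e_i-1}<r,$$
so $h<r$.

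Next we set up the random graph. Let $\varepsilon\in(0,1]$ be a small constant chosen later, and put $p=\varepsilon n^{h-r}$; since $h<r$, we have $p\le1$. Include each $r$-subset of $[n]$ independently with probability $p$, and call the result $\mathcal H$. Then
$$\mathbb E|\mathcal H|=p\binom nr\ge \frac{\varepsilon}{2\,r!}\,n^h$$
for large $n$.

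For each $i$, let $X_i$ be the number of $(v_i,e_i)$-configurations in $\mathcal H$. Each such configuration is a set of $e_i$ distinct edges whose union $U$ has $|U|=u\le v_i$. There are at most $\sum_{u\le v_i}\binom nu\le C_1n^{v_i}$ choices for $U$, and at most $\binom{\binom{v_i}{r}}{e_i}$ choices of $e_i$ distinct $r$-subsets of $U$. Each fixed family of $e_i$ distinct $r$-sets lies in $\mathcal H$ with probability $p^{e_i}$. Hence
$$\mathbb E X_i\le C_2\, n^{v_i}\varepsilon^{e_i}n^{e_i(h-r)}=C_2\,\varepsilon^{e_i}\,n^{\,h+\left[v_i-e_ir+(e_i-1)h\right]}\le C_2\,\varepsilon^{2}\,n^{h}.$$
The last inequality uses two facts. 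By the definition of $h$ we have $(e_i-1)h\le e_ir-v_i$, so the bracket is at most $0$. And $e_i\ge2$ with $\varepsilon\le1$ gives $\varepsilon^{e_i}\le\varepsilon^2$. Here $C_2$ depends only on $r$ and the pairs $(v_i,e_i)$.

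Finally we delete. Remove one edge from each $(v_i,e_i)$-configuration, over all $i\in[a]$, and call the result $\mathcal H'$. Removing edges cannot create new configurations, so $\mathcal H'$ is $(v_i,e_i)$-free for every $i$. Moreover,
$$\mathbb E|\mathcal H'|\ge \mathbb E|\mathcal H|-\sum_i\mathbb E X_i\ge \left(\frac{\varepsilon}{2\,r!}-aC_2\varepsilon^2\right)n^h.$$
Choosing $\varepsilon$ small enough makes the coefficient a positive constant $c$. Some outcome of the random graph attains at least the expectation, which yields the required $r$-graph with at least $cn^h$ edges.

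The only step needing care is the exponent bookkeeping in the bound on $\mathbb E X_i$. That step works precisely because $h$ is defined as the minimum of $(e_ir-v_i)/(e_i-1)$ over $i$. The fact that $e_i\ge2$ is what makes the configuration count of order $\varepsilon^2n^h$, smaller by a factor $\varepsilon$ than the edge count $\varepsilon n^h$.
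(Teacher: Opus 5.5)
Your proposal is correct and is the standard random-subgraph-plus-deletion argument: the bound $v_i+e_i(h-r)\le h$ comes exactly from the definition of $h$ as a minimum, and the extra factor $\varepsilon^{e_i}\le\varepsilon^2$ comes from $e_i\ge2$. The paper does not prove this lemma itself but cites Lemma~1.7 of F\"uredi--Ruszink\'o and Proposition~6 of Shangguan--Tamo, which use essentially this same alteration method; your separate treatment of the case $h\le0$ is harmless but not needed.
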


Since the union of two distinct $r$-edges has size at least $r+1$, every $(v,e)$-freeness condition with $v\le r$ is automatic. We omit such conditions when applying Lemma~\ref{lem:e^--free}.

We next introduce the packing terminology used in the proof of the lower bound in \cref{thm:main-canc}.

\begin{definition}[Packings]\label{def:packing}
    Let $\mathcal{J}$ be a fixed $k$-graph on $m$ vertices. A \emph{$\mathcal{J}$-packing} in $\binom{[n]}{k}$ is a family $\mathcal{P}=\{(V(\mathcal{J}_i),\mathcal{J}_i):i\in[|\mathcal{P}|]\}$ such that each $(V(\mathcal{J}_i),\mathcal{J}_i)$ is a copy of $\mathcal{J}$ and $\mathcal{J}_i\cap\mathcal{J}_{i'}=\emptyset$ for all distinct $i,i'$.

    The packing $\mathcal{P}$ is \emph{induced} if, for all distinct $i,i'$, we have $|V(\mathcal{J}_i)\cap V(\mathcal{J}_{i'})|\le k$, and whenever equality holds, $V(\mathcal{J}_i)\cap V(\mathcal{J}_{i'})$ is an edge of neither $\mathcal{J}_i$ nor $\mathcal{J}_{i'}$.

    For an integer $e\ge2$, the packing $\mathcal{P}$ is \emph{$e$-locally sparse} if the $m$-graph $\mathcal{V}(\mathcal{P}):=\{V(\mathcal{J}_i):i\in[|\mathcal{P}|]\}$ is $(\ell m-(\ell-1)k-1,\ell)$-free for every $2\le\ell\le e$.
\end{definition}

Since the copies in a $\mathcal{J}$-packing are pairwise edge-disjoint, every such packing satisfies $|\mathcal{P}|\le\binom{n}{k}/|\mathcal{J}|$. Results of R\"odl \cite{Rodl-nibble}, Frankl and R\"odl \cite{Frankl-Rodl-matching}, and Pippenger and Spencer \cite{Pippenger-Spencer-asymptotic} imply the existence of $\mathcal{J}$-packings of size $(1-o(1))\binom{n}{k}/|\mathcal{J}|$. Frankl and F\"uredi \cite{Frankl-Furedi-colored-packing} showed that such packings can also be chosen to be induced.

Using the conflict-free matching results of Delcourt and Postle \cite{Delcourtconflict} and Glock, Joos, Kim, K\"uhn, and Lichev \cite{Glockconflict}, We proved the following strengthening, which also imposes local sparsity \cite[Lemma~3.1]{liu2026union-free}. 

\begin{lemma}\label{lem:conflict-free-matching}
    Let $m>k$ and $e\ge2$ be fixed integers, and let $\mathcal{J}$ be a fixed $k$-graph on $m$ vertices with $|\mathcal{J}|\ge2$. For every $\epsilon>0$, there exists an integer $n_0$ such that, for all $n\ge n_0$, there exists a $\mathcal{J}$-packing $\mathcal{P}=\{(V(\mathcal{J}_i),\mathcal{J}_i):i\in[|\mathcal{P}|]\}$ in $\binom{[n]}{k}$ satisfying the following properties: 
    \begin{itemize}
        \item [(i)] $\mathcal{P}$ is induced; 
        \item [(ii)] $\mathcal{P}$ is $e$-locally sparse;  
        \item [(iii)] $\mathcal{P}$ is \emph{near-optimal}, i.e., $|\mathcal{P}|\ge(1-\epsilon)\binom{n}{k}/|\mathcal{J}|$. 
    \end{itemize}
\end{lemma}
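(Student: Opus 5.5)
The plan is to obtain the packing as an almost-perfect conflict-free matching in an auxiliary hypergraph, using the theorems of Delcourt--Postle \cite{Delcourtconflict} and Glock--Joos--Kim--K\"uhn--Lichev \cite{Glockconflict}. The main difficulty is the induced condition (i) for pairs of copies meeting in exactly $k$ vertices. A near-optimal packing covers almost every $k$-set by an edge of some copy. However, every copy also has $\binom{m}{k}-|\mathcal{J}|$ non-edge $k$-subsets of its vertex set. If such a non-edge $k$-set were covered by an edge of another copy, condition (i) would fail. Encoding this directly as conflicts of size $2$ gives about $D$ conflicts per copy, which is far too many for the conflict-free matching theorems. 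So I will not treat these pairs as conflicts. Instead I will reserve a sparse set of $k$-sets that only ever serve as non-edges.

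\emph{Setup.} Fix a small $\delta>0$ and choose a random family $\mathcal{R}\subseteq\binom{[n]}{k}$ containing each $k$-set independently with probability $\delta$. (If $\mathcal{J}=\binom{V(\mathcal{J})}{k}$ is complete, take $\mathcal{R}=\emptyset$.) Call a copy $(V(\mathcal{J}'),\mathcal{J}')$ of $\mathcal{J}$ in $\binom{[n]}{k}$ \emph{admissible} if every edge of $\mathcal{J}'$ lies outside $\mathcal{R}$ and every non-edge $k$-subset of $V(\mathcal{J}')$ lies in $\mathcal{R}$. Let $\mathcal{A}$ be the $|\mathcal{J}|$-uniform hypergraph with vertex set $\binom{[n]}{k}\setminus\mathcal{R}$, having one edge $\mathcal{J}'$ for each admissible copy. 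A matching in $\mathcal{A}$ is exactly a $\mathcal{J}$-packing whose copies use $\mathcal{R}$ only for non-edges.

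By Chernoff-type concentration, with high probability every vertex of $\mathcal{A}$ has degree $(1\pm n^{-c})D$, where $D=\Theta_\delta(n^{m-k})$. Codegrees are $O(n^{m-k-1})=O(D/n)$, since two distinct $k$-sets span at least $k+1$ vertices.

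\emph{Conflicts.} I will introduce two types of conflicts.

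\begin{itemize}
    \item[(a)] Pairs of copies whose vertex sets share at least $k+1$ vertices. Each copy lies in $O(n^{m-k-1})=O(D/n)$ such pairs.
    \item[(b)] For each $2\le\ell\le e$, every set of $\ell$ copies whose vertex sets span at most $\ell m-(\ell-1)k-1$ vertices. A given copy lies in $O(n^{(\ell-1)(m-k)-1})=O(D^{\ell-1}/n)$ such conflicts.
\end{itemize}

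These bounds are exactly the conflict-degree hypotheses in \cite{Glockconflict}, and the codegree conditions between conflicts follow by the same vertex counting. We also check that (a) and (b) suffice for (i) and (ii).
\begin{itemize}
    \item Avoiding (b) gives (ii) directly.
    \item Avoiding (a) gives $|V(\mathcal{J}_i)\cap V(\mathcal{J}_{i'})|\le k$ for all distinct copies.
    \item Suppose two copies meet in exactly a $k$-set $K$. If $K\notin\mathcal{R}$, admissibility forces $K$ to be an edge of both copies, which contradicts that they are disjoint edges of the matching. Hence $K\in\mathcal{R}$, and then $K$ is a non-edge of both copies, as required by (i).
\end{itemize}

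\emph{Size.} The conflict-free matching theorem then yields a conflict-free matching covering all but an $n^{-c}$-fraction of the vertices of $\mathcal{A}$. There are $(1-\delta-o(1))\binom{n}{k}$ such vertices, so the packing has at least $(1-\delta-o(1))\binom{n}{k}/|\mathcal{J}|$ copies. Choosing $\delta<\epsilon/2$ gives (iii).

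The step I expect to need the most care is verifying the quantitative hypotheses of the conflict-free matching theorem for $\mathcal{A}$. This means checking near-regularity after the random reservation $\mathcal{R}$, which is where $|\mathcal{J}|\ge2$ and $m>k$ are needed so that $D\to\infty$ polynomially. It also means checking the conflict-codegree bounds for conflicts of type (b) when several copies share vertices with a fixed copy. These are routine but tedious counts of vertex unions.
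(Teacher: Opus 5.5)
Your route is the one the paper relies on. The lemma is not reproved here; it is imported from \cite[Lemma~3.1]{liu2026union-free}, which rests on the same conflict-free matching theorems. Reserving a random family $\mathcal R$ of $k$-sets usable only as non-edges is a sound way to get the equality case of (i). A $k$-subset of $V(\mathcal J')$ outside $\mathcal R$ is forced to be an edge of $\mathcal J'$, so it can never be shared by two copies in the packing. Your degree, codegree and size estimates are also fine. Two of your hypothesis checks, however, are not correct as stated, and both need the standard repairs.

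First, the conflict-codegree conditions fail for the full conflict system you define. Suppose $\mathcal J_1,\mathcal J_2$ share $k+1$ vertices. Then every copy meeting $V(\mathcal J_1)\cup V(\mathcal J_2)$ in $k$ vertices completes a type (b) conflict with $\ell=3$. So this pair lies in $\Theta(D)$ conflicts of size $3$, whereas \cite{Glockconflict} allows at most $D^{1-\varepsilon}$. The fix is to keep only inclusion-minimal conflicts; a matching avoiding these avoids all of them. Then any $j'$ copies inside a minimal conflict of size $j$ span at least $j'm-(j'-1)k$ vertices. The remaining $j-j'$ copies add at most $(j-j')(m-k)-1$ new vertices, so the count is $O(D^{j-j'}/n)$, as required.

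Second, the bounded conflict systems of \cite{Glockconflict} contain only conflicts of size at least $3$. Your pairs in (a), which are exactly the case $\ell=2$ of (b), are therefore not covered by the theorem you invoke. Each copy lies in only $O(D/n)$ such pairs, so you can treat them separately. One option is a test function showing that the matching contains only about $D^{-2}\cdot O(n^{2m-k-1})=O(n^{k-1})$ such pairs, followed by deleting one copy from each. Another is a version of the theorem that allows sparse conflicts of size $2$.

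Finally, if $\mathcal J$ has isolated vertices, distinct copies may have the same edge set, so $\mathcal A$ is a multi-hypergraph. The multiplicities are $O(n^{m-k-1})$ and are absorbed by the codegree bound, but the matching theorem you apply must allow parallel edges.
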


\section{Cancellative hypergraphs}\label{sec:thm:main-canc}

\subsection{The lower bound}\label{subsec:thm:main-canc-lower}

\noindent The goal of this subsection is to establish the lower bound for the limit in \cref{thm:main-canc}, i.e., for every fixed $\epsilon>0$ there exists $n_0=n_0(t,k,\epsilon)$ such that for all $n\ge n_0$, 
\begin{equation}\label{eq:canc-LB}
    C_{2(t-1)}(n,tk)\ge(1-\epsilon)\cdot \frac{\binom{n}{k}}{\binom{tk-1}{k-1}}.
\end{equation}

Our proof relies on the following construction. Let $\mathcal{F}$ be a fixed $tk$-graph. The \emph{$k$-shadow} of $\mathcal{F}$, denoted by $\sigma_k(\mathcal{F})$, consists of all $k$-subsets of $V(\mathcal{F})$ that are contained in at least one member of $\mathcal{F}$; in other words, $\sigma_k(\mathcal{F})=\cup_{F\in\mathcal{F}}\binom{F}{k}$.

\begin{construction}\label{construction}
Let $\mathcal{F}$ be a fixed $tk$-graph and $\sigma_k(\mathcal{F})$ be its $k$-shadow. Let $\mathcal{P}=\{(V(\mathcal{J}_i),\mathcal{J}_i):i\in[|\mathcal{P}|]\}$ be a \emph{$\sigma_k(\mathcal{F})$-packing} in $\binom{[n]}{k}$, where the $\mathcal{J}_i$'s are pairwise edge-disjoint copies of $\sigma_k(\mathcal{F})$ with vertex set $V(\mathcal{J}_i)$. For each $i$, put a copy $\mathcal{F}_i$ of $\mathcal{F}$ on top of each $\mathcal{J}_i$ such that $\sigma_k(\mathcal{F}_i)=\mathcal{J}_i$ and $V(\mathcal{F}_i)=V(\mathcal{J}_i)$, and let 
    \begin{align}\label{eq:H-sparse1}
        \mathcal{H}(\mathcal{F})=\bigcup_{i=1}^{|\mathcal{P}|} \mathcal{F}_i.   
    \end{align}
\noindent Then $\mathcal{H}(\mathcal{F})\subseteq\binom{[n]}{tk}$ is a $tk$-graph formed by the union of copies of $\mathcal{F}$, and hence $|\mathcal{H}(\mathcal{F})|=|\mathcal{P}|\cdot |\mathcal{F}|$. 
\end{construction}

For a near-optimal packing, this construction gives approximate $\binom{n}{k}|\mathcal{F}|/|\sigma_k(\mathcal F)|$ edges. The next lemma ensures cancellativity. It shows that if $\mathcal{F}$ and $\mathcal{P}$ satisfy the following conditions, then the hypergraph $\mathcal{H}(\mathcal{F})$ defined in \cref{construction} is $2(t-1)$-cancellative.

\begin{lemma}\label{lem:canc-H(F)}
    Let $\mathcal{F}\subseteq\binom{[m]}{tk}$ be a $tk$-graph, and let $\mathcal{P}=\{(V(\mathcal{J}_i),\mathcal{J}_i):i\in[|\mathcal{P}|]\}$ be an induced $\sigma_k(\mathcal{F})$-packing. Suppose that 
    \begin{itemize}
        \item[(i)] $\mathcal{F}$ is $2(t-1)$-cancellative;
        \item[(ii)] $\mathcal{F}$ is $(\ell \cdot tk-(\ell-1)k-1,\ell)$-free for all $2\le \ell\le 2t$; 
        \item[(iii)] $\mathcal{P}$ is $2t$-locally sparse. 
    \end{itemize}
    Then the $tk$-graph $\mathcal{H}(\mathcal{F})$ defined in \cref{construction} is $2(t-1)$-cancellative. 
\end{lemma}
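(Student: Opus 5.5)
The plan is to argue by contradiction. Suppose there are $2t$ distinct edges $A_1,\dots,A_{2t-2},B,C\in\mathcal H(\mathcal F)$ with $U\cup B=U\cup C$, where $U:=\bigcup_{j}A_j$. Equivalently, $B\Delta C\subseteq U$. Each edge lies in a unique copy $\mathcal F_i$, because the $\mathcal J_i$ are edge-disjoint and $\sigma_k(\mathcal F_i)=\mathcal J_i$. Let $I$ be the set of copies meeting the configuration. If $|I|=1$, all $2t$ edges lie in one copy of $\mathcal F$, contradicting (i). So the work is in the case $|I|\ge2$.

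\textbf{Two structural facts from the packing.}
\begin{itemize}
\item[(a)] \emph{Induced condition.} For $i\ne i'$, every edge $E\in\mathcal F_i$ satisfies $|E\cap V(\mathcal F_{i'})|\le k-1$. Indeed, $|V(\mathcal J_i)\cap V(\mathcal J_{i'})|\le k$. If $E$ contained $k$ common vertices, this $k$-set would be the whole intersection and would lie in $\sigma_k(\mathcal F_i)=\mathcal J_i$, which the induced condition forbids. In particular, edges from different copies share at most $k-1$ vertices.
\item[(b)] \emph{Local sparsity (iii).} Apply it to every subfamily of the at most $2t$ copies in $I$, using the fact that $(v,\ell)$-freeness passes to subfamilies. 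Then the copies in $I$ can be ordered so that each new copy meets the union of the previous ones in at most $k$ vertices. Globally, $\bigl|\bigcup_{i\in I}V(\mathcal F_i)\bigr|\ge |I|m-(|I|-1)k$. So the copies form a "$k$-tree-like" structure. I would prove this ordering claim first, by a peeling argument: a copy overlapping the others in more than $k$ vertices in every order would force a $(\ell m-(\ell-1)k-1,\ell)$-configuration.
\end{itemize}

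\textbf{Using (a) and (b).}
\begin{itemize}
\item \emph{Case $B$, $C$ in different copies $i_B\ne i_C$.} Here $|B\cap C|\le k-1$, so $|B\setminus C|\ge tk-k+1$, and all of these vertices must be covered by $U$. A leaf-copy argument should show that, apart from the $A_j$ lying in copy $i_B$, the other copies jointly contribute at most about $k$ vertices of $V(\mathcal F_{i_B})$. Hence the edges of copy $i_B$ among the $A_j$, together with $B$, cover $B$ while each other copy adds few vertices. Plugging this into a vertex count of the union of the edges inside copy $i_B$ should produce a $(\ell\cdot tk-(\ell-1)k-1,\ell)$-configuration inside $\mathcal F$ for some $\ell\le 2t$, contradicting (ii).
\item \emph{Case $B$, $C$ in the same copy $i_0$.} The vertices of $B\Delta C$ covered only by edges from other copies lie in the overlap of $V(\mathcal F_{i_0})$ with the rest. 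Using (b) I would show that these contributions can be replaced by edges of $\mathcal F_{i_0}$ itself. Alternatively, they are so few that the edges of $\mathcal F_{i_0}$ among the $A_j$ together with $B,C$ either already violate (i), or, counting vertices, form a forbidden $(\ell tk-(\ell-1)k-1,\ell)$-configuration in $\mathcal F$, contradicting (ii).
\end{itemize}

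\textbf{Expected main obstacle.} The hardest step is the quantitative bookkeeping when edges from several copies jointly cover $B\Delta C$. I would handle it by strong induction on $|I|$. Pick a leaf copy $i^\ast$ in the ordering from (b) that contains neither $B$ nor $C$, if one exists. Its edges meet the rest of the configuration in at most $k$ vertices, hence (by (a)) in at most $k-1$ per edge. The aim is to show that such a leaf copy either is useless, so its edges can be deleted, or is used so heavily that it contributes $\ge2$ edges whose union, counted with the at most $k$ attachment vertices, creates a sparse configuration violating (ii).

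Deleting edges reduces the number of $A_j$, which is harmless: it makes cancellativity stronger, so the contradiction persists with fewer than $2t-2$ edges, and monotonicity in $t$ of the forbidden condition holds. After peeling all such leaves, only the copies of $B$ and $C$ remain, and the two cases above finish the argument. The threshold $2t$ in (ii) and (iii) is exactly the total number of edges, so all configurations arising have $\ell\le2t$.
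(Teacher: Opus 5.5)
\textbf{There is a genuine gap: the peeling structure your plan rests on does not exist, and the step that actually uses (iii) is missing.}

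Your step (b) is false as stated, and the rest of the plan depends on it. Local sparsity only bounds the total overlap $\sum_{i\in I}|V(\mathcal F_i)|-|\bigcup_{i\in I}V(\mathcal F_i)|\le(|I|-1)k$; it does not force a tree-like order on the copies. For example, take $k=2$ and four copies whose vertex sets pairwise meet in one vertex, with the six meeting vertices distinct. Unions of $2,3,4$ of these copies have sizes $2m-1$, $3m-3$, $4m-6$, so the four copies satisfy both the induced and the local-sparsity conditions. Since $2t\ge4$, four copies can meet one configuration. Yet every copy meets the union of the other three in $3>k$ vertices, so whichever copy comes last in your ordering violates (b).

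Your claim that the other copies jointly meet $V(\mathcal F_{i_B})$ in only about $k$ vertices also fails. One copy may meet $2t-1$ pairwise disjoint copies in $k$ vertices each.

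The peeling dichotomy (a leaf copy is either useless or contributes at least two edges) misses the typical bad situation. Take $B\in\mathcal F_{i_B}$, $C\in\mathcal F_{i_C}$, and let each $A_j$ be the only configuration edge in its own copy, covering up to $k-1$ vertices of $B$ and $k-1$ of $C$. Then $\mathcal F_{i_B}$ contains no configuration of two or more edges, so nothing inside one copy can contradict (ii). Fact (a) alone does not exclude this when $k\ge3$: if $|B\cap C|=k-1$, then $|B\Delta C|=2(t-1)k+2\le(2t-2)(2k-2)$. The contradiction has to come from (iii) applied to all copies at once.

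The missing idea is to first prove that $\mathcal H(\mathcal F)$ itself is $(\ell tk-(\ell-1)k-1,\ell)$-free for every $2\le\ell\le 2t$. Split $\ell$ edges over copies $\mathcal F_1,\dots,\mathcal F_s$, with $\ell_i$ edges and union $X_i\subseteq V(\mathcal F_i)$ in copy $i$. Condition (ii) gives $|X_i|\ge\ell_itk-(\ell_i-1)k$. Since $\bigcup_iV(\mathcal F_i)\subseteq\bigcup_iX_i\cup\bigcup_i(V(\mathcal F_i)\setminus X_i)$, condition (iii) gives
$$\Bigl|\bigcup_iX_i\Bigr|\ge\sum_i|X_i|-\Bigl(sm-\Bigl|\bigcup_iV(\mathcal F_i)\Bigr|\Bigr)\ge\sum_i\bigl(\ell_itk-(\ell_i-1)k\bigr)-(s-1)k=\ell tk-(\ell-1)k.$$

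With this, both cases are short counts and no peeling or induction is needed.
\begin{itemize}
\item[$\cdot$] \emph{$B,C$ in different copies.} Fact (a) gives $|B\cap C|\le k-1$, and $B\Delta C\subseteq U$ gives $|B\cup C\cup U|\le|U|+|B\cap C|\le(2t-2)tk+k-1=2t\cdot tk-(2t-1)k-1$. This contradicts the case $\ell=2t$.
\item[$\cdot$] \emph{$B,C$ in one copy.} Let $a$ be the number of $A_j$ in that copy; $a<2t-2$ by (i). Condition (ii) gives $|B\cap C|\le k$, and by (a) each of the other $2t-2-a\ge1$ edges covers at most $k-1$ vertices of $B\Delta C$. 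So $B$, $C$ and the $a$ in-copy edges span at most $atk+k+(2t-2-a)(k-1)\le(a+2)tk-(a+1)k-1$ vertices. This contradicts (ii) with $\ell=a+2$.
\end{itemize}
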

\begin{proof}[Proof of \cref{lem:canc-H(F)}]
    We first show that $\mathcal{H}(\mathcal{F})$ is $(\ell\cdot tk-(\ell-1)k-1,\ell)$-free for every $2\le\ell\le 2t$. Fix such an $\ell$ and let $F_1, \ldots, F_{\ell}$ be any $\ell$ distinct edges of $\mathcal{H}(\mathcal{F})$. Suppose that these edges are distributed among exactly $s$ distinct copies $\mathcal{F}_1,\ldots,\mathcal{F}_s$. For each $i\in[s]$, let $$L_i:=\{j\in[\ell]:F_j\in\mathcal{F}_i\},\quad\ell_i:=|L_i|,\quad X_i:=\bigcup_{j\in L_i}F_j.$$ Then $\ell_i\ge 1$ for every $i\in[s]$ and $\sum_{i=1}^s\ell_i=\ell$. If $\ell_i\ge 2$, then property (ii) gives $|X_i|\ge \ell_i\cdot tk-(\ell_i-1)k$. The same inequality also holds when $\ell_i=1$, since in that case $X_i$ consists of a single $tk$-edge. Hence, for every $i\in[s]$, $|X_i|\ge \ell_i\cdot tk-(\ell_i-1)k$. Since $X_i\subseteq V(\mathcal{F}_i)$, we have $$\left|\bigcup_{i=1}^sV(\mathcal{F}_i)\right|\le\left|\bigcup_{i=1}^sX_i\right|+\left(\sum_{i=1}^s|V(\mathcal{F}_i)|-\sum_{i=1}^s|X_i|\right).$$ 
    Note that each copy $\mathcal{F}_i$ has $m$ vertices. Moreover, since $s\le\ell\le2t$, property (iii) gives $\left|\cup_{i=1}^sV(\mathcal{F}_i)\right|\ge sm-(s-1)k$, when $s\ge2$. The same inequality holds with equality when $s=1$. Therefore, 
    \begin{align*}
        \left|\bigcup_{i=1}^\ell F_i\right|=\left|\bigcup_{i=1}^s X_i\right|
        &\ge \sum_{i=1}^s|X_i|+\left|\bigcup_{i=1}^sV(\mathcal{F}_i)\right|-\sum_{i=1}^s|V(\mathcal{F}_i)|\\
        &\ge \sum_{i=1}^s\left(\ell_i\cdot tk-(\ell_i-1)k\right)+\left(sm-(s-1)k\right)-sm\\
        &=\ell\cdot tk-(\ell-1)k,
    \end{align*}
    Thus every $\ell$ distinct edges of $\mathcal{H}(\mathcal{F})$ have union of size at least $\ell tk-(\ell-1)k$, as required. 
    
    Next, we bound the intersection of an edge with the vertex set of another copy.
    \begin{claim}\label{cla:inducedness}
    Let $\mathcal{F}'$ and $\mathcal{F}''$ be two distinct copies of $\mathcal{F}$ in $\mathcal{H}(\mathcal{F})$. Then for every $F\in \mathcal{F}'$, we have $|F\cap V(\mathcal{F}'')|\le k-1$. 
    \end{claim}
    \noindent Suppose that $|F\cap V(\mathcal{F}'')|\ge k$ and choose a $k$-set $T\subseteq F\cap V(\mathcal F'')$. Since $T\subseteq F\in \mathcal{F}'$, the set $T$ is an edge of the shadow copy $\sigma_k(\mathcal{F}')$. If $|V(\mathcal{F}')\cap V(\mathcal{F}'')|>k$, this contradicts the first condition in the definition of an induced $\sigma_k(\mathcal{F})$ packing. If the intersection has size exactly $k$, then it equals $T$, which is an edge of $\sigma_k(\mathcal{F}')$, contradicting the second condition of an induced $\sigma_k(\mathcal{F})$ packing. Therefore, $|F\cap V(\mathcal{F}'')|\le k-1$. 
    
    Finally, we prove that $\mathcal{H}(\mathcal{F})$ is $2(t-1)$-cancellative.
    Suppose for contradiction that there exist $2t$ distinct edges, say $F_1,\ldots,F_{2t-2},F,F'\in\mathcal{H}(\mathcal{F})$, such that \begin{align}\label{eq:cancellative-not}
        F\Delta F'\subseteq\bigcup_{i=1}^{2t-2}F_i.
    \end{align}
    
    \paragraph{Case 1.} Suppose that $F$ and $F'$ belong to the same copy, say $\mathcal{F}_i$, of $\mathcal{F}$. Since $\mathcal{F}$ is $(2\cdot tk-k-1,2)$-free, we have $|F\cap F'|\le k$. Let $I\subseteq [2t-2]$ be the set of indices $j$ for which $F_j\in \mathcal{F}_i$ and set $|I|=a$. Since $\mathcal{F}$ is $2(t-1)$-cancellative, we have $0\le a<2(t-1)$. Moreover, for each $j\in [2t-2]\setminus I$, the edge $F_j$ lies in a copy of $\mathcal{F}$ different from $\mathcal{F}_i$. Then Claim~\ref{cla:inducedness} gives 
    \begin{align}\label{eq:induced}
        |F_j\cap (F\Delta F')|\le |F_j\cap V(\mathcal{F}_i)|\le k-1.
    \end{align}

    \noindent  Consequently, we have 
    \begin{align*}
        \left|F\cup F'\cup \left(\bigcup_{j\in I} F_j\right)\right|= &\left|\bigcup_{j\in I} F_j\right|+\left|(F\cup F')\setminus\bigcup_{j\in I} F_j\right|\\
        \le&\left|\bigcup_{j\in I} F_j\right|+|F\cap F'|+|F\Delta F'|-\left|(F\Delta F')\bigcap\left(\bigcup_{j\in I}F_j\right)\right|\\
        \le&\left|\bigcup_{j\in I} F_j\right|+|F\cap F'|+\left|(F\Delta F')\bigcap\left(\bigcup_{j\in [2t-2]\setminus I}F_j\right)\right|\\
        \le&a\cdot tk+k+(2t-2-a)\cdot (k-1)\\
        =&(a+2)\cdot tk-(a+1)k-(2t-2-a)\\
        \le&(a+2)\cdot tk-(a+1)k-1,      
    \end{align*}
    
    \noindent where the second inequality follows from \eqref{eq:cancellative-not} and the third inequality follows from \eqref{eq:induced}. This implies that $F,F'$ and $\{F_j:j\in I\}$ form an $((a+2)\cdot tk-(a+1)k-1,a+2)$-configuration with $2\le a+2< 2t$, which contradicts the fact that $\mathcal{H}(\mathcal{F})$ is $(\ell\cdot tk-(\ell-1)k-1,\ell)$-free for all $2\le\ell\le 2t$. 

   \paragraph{Case 2.} Suppose that $F$ and $F'$ belong to different copies of $\mathcal{F}$, say $F\in\mathcal{F}_i$ and $F'\in\mathcal{F}_{i'}$, $i\ne i'$. Then we have $|F\cap F'|\le|F\cap V(\mathcal{F}_{i'})|\le k-1$. Therefore,   
   \begin{align*}
       \left|(F\cup F')\cup\left(\bigcup_{j=1}^{2t-2}F_j\right)\right|=&\left|\bigcup_{j=1}^{2t-2}F_j\right|+\left|(F\cup F')\setminus\bigcup_{j=1}^{2t-2}F_j\right|\le\left|\bigcup_{j=1}^{2t-2}F_j\right|+|F\cap F'|\\
       \le&(2t-2)\cdot tk+(k-1)=2t\cdot tk-(2t-1)k-1,
   \end{align*}
   where the first inequality follows from \eqref{eq:cancellative-not}. This implies that $F,F'$ and $\{F_j:1\le j\le 2t-2\}$ form a $(2t\cdot tk-(2t-1)k-1,2t)$-configuration, which contradicts the fact that $\mathcal{H}(\mathcal{F})$ is $(2t\cdot tk-(2t-1)k-1,2t)$-free.
\end{proof}

We next construct the fixed $tk$-graph $\mathcal F$ used in Construction~\ref{construction}, which satisfies the assumptions (i) and (ii) of Lemma~\ref{lem:canc-H(F)}. 
\begin{lemma}\label{lem:base-canc}
For every $0<\epsilon<1$, there exists an integer $m_0=m_0(t,k,\epsilon)$ such that, for every integer $m\ge m_0$, there exists a nonempty $tk$-graph $\mathcal F\subseteq\binom{[m]}{tk}$ satisfying the following properties:
\begin{itemize}
    \item[(i)] $\mathcal F$ is $s$-cancellative for every integer $s\ge1$;
    \item[(ii)] $\mathcal F$ is $(\ell\cdot tk-(\ell-1)k-1,\ell)$-free for every $2\le\ell\le2t$;
    \item[(iii)]
    $\frac{|\mathcal F|}{|\sigma_k(\mathcal F)|}\ge (1-\epsilon)\frac{1}{\binom{tk-1}{k-1}}.$
\end{itemize}
\end{lemma}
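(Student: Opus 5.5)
The plan is an \emph{apex construction}. Each edge of $\mathcal F$ will consist of a private vertex, used by no other edge, together with a $(tk-1)$-set taken from a small common core. The motivation comes from the upper-bound heuristic. In an extremal configuration, each edge should own about $\binom{tk-1}{k-1}$ $k$-sets of degree one, namely those containing a fixed vertex of the edge. All its other $k$-subsets should be shared heavily with other edges. Private apex vertices achieve exactly this.

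\textbf{Construction.} Let $w$ be a large integer and let $W$ be a $w$-set. Apply \cref{lem:e^--free} with uniformity $tk-1$ and the conditions $(\ell(tk-1)-(\ell-1)k-1,\ell)$-free for $2\le\ell\le 2t$. This gives a $(tk-1)$-graph $\mathcal S$ on $W$ with $|\mathcal S|\ge c\,w^{h}$, where
$$h=\min_{2\le\ell\le 2t}\frac{\ell(tk-1)-\bigl(\ell(tk-1)-(\ell-1)k-1\bigr)}{\ell-1}=\min_{2\le \ell\le 2t}\Bigl(k+\frac{1}{\ell-1}\Bigr)=k+\frac{1}{2t-1}>k.$$
Some of these conditions may have $v\le tk-1$, for instance when $(t,k)=(2,2)$ and $\ell=2$. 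Such conditions are automatic and are omitted, as remarked after the lemma. For each $S\in\mathcal S$, introduce a new vertex $a_S$, with distinct $S$ receiving distinct vertices. Set $\mathcal F=\{S\cup\{a_S\}:S\in\mathcal S\}$, so $\mathcal F$ is a $tk$-graph on $w+|\mathcal S|$ vertices. To obtain every $m\ge m_0$, fix $w=w_0(t,k,\epsilon)$, set $m_0=w_0+|\mathcal S|$, and add isolated vertices.

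\textbf{Property (i).} Let $B,C$ be distinct edges and let $U$ be any union of edges different from $B$ and $C$. The apex $a_B$ lies only in $B$, so $a_B\in B\cup U$ but $a_B\notin C\cup U$. Hence $B\cup U\ne C\cup U$, and $\mathcal F$ is $s$-cancellative for every $s$.

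\textbf{Property (ii).} Any $\ell$ distinct edges contribute $\ell$ distinct apexes, which lie outside $W$. Their core parts are $\ell$ distinct members of $\mathcal S$, whose union has at least $\ell(tk-1)-(\ell-1)k$ vertices by the choice of $\mathcal S$. So the union of the $\ell$ edges has at least $\ell tk-(\ell-1)k$ vertices, as required.

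\textbf{Property (iii).} A $k$-subset of an edge either contains the apex or lies in $W$. Hence
$$|\sigma_k(\mathcal F)|\le |\mathcal F|\binom{tk-1}{k-1}+\binom{w}{k}.$$
Since $|\mathcal F|=|\mathcal S|\ge c\,w^{k+1/(2t-1)}$, we have $\binom{w}{k}=o(|\mathcal F|)$ as $w\to\infty$. Choosing $w_0$ large enough therefore gives $|\mathcal F|/|\sigma_k(\mathcal F)|\ge(1-\epsilon)/\binom{tk-1}{k-1}$.

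The main obstacle is finding a structure whose shadow is dominated by degree-one $k$-sets in the right proportion. The apex idea resolves it. The remaining point to check is that the exponent from \cref{lem:e^--free} strictly exceeds $k$, which the computation above confirms.
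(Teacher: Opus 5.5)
Your proposal is correct and follows the paper's proof essentially step for step. You apply Lemma~\ref{lem:e^--free} to obtain a $(tk-1)$-graph with exponent $k+\frac{1}{2t-1}>k$, attach a private apex vertex to each edge, and bound the shadow by $\binom{w}{k}+|\mathcal F|\binom{tk-1}{k-1}$, which is exactly the paper's argument.
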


\begin{proof}
    For $2\le \ell\le 2t$, the $(\ell \cdot (tk-1)-(\ell-1)k-1,\ell)$-free condition is automatic whenever $\ell \cdot (tk-1)-(\ell-1)k-1\le tk-1$, so such conditions may be omitted. 
    For every remaining value of $\ell$, $$\frac{\ell\cdot (tk-1)-[\ell\cdot (tk-1)-(\ell-1)k-1]}{\ell-1}=k+\frac{1}{\ell-1}\ge k+\frac{1}{2t-1}.$$
    Applying Lemma~\ref{lem:e^--free} with $r=tk-1$ therefore gives constant $c=c(t,k)>0$ such that, for all sufficiently large $q$, there is a $(tk-1)$-graph $\mathcal G=\{G_1,\dots,G_N\}$ on $[q]$ having $N\ge cq^{k+\frac{1}{2t-1}}$ edges that is $(\ell(tk-1)-(\ell-1)k-1,\ell)$-free for every $2\le\ell\le 2t$. Choose and fix $q$ large enough that $\binom{q}{k}/N\le \epsilon$. 

    Give each $G_i$ a private $u_i$, and set 
    \begin{equation}\label{eq:canc-F}
    \mathcal F:=\{G_i\cup\{u_i\}:i\in[N]\}.
    \end{equation}
    This defines $\mathcal{F}$ on $m_0:=q+N$ vertices. For any $m\ge m_0$, add $m-m_0$ isolated vertices and identify the vertex set with $[m]$. 

    

For property~(i), let $B=G_b\cup\{u_b\}$ and $C=G_c\cup\{u_c\}$ be any two distinguished edges. The vertex $u_b$ belongs to $B\triangle C$ and to no other edge of $\mathcal F$. Therefore $B\triangle C$ cannot be covered by the union of any collection of other edges, and $\mathcal F$ is $s$-cancellative for every $s\ge1$. 

For property~(ii), fix $2\le\ell\le2t$. Without loss of generality, let the chosen $\ell$ edges be $G_1\cup\{u_1\},\ldots,G_\ell\cup\{u_\ell\}$. Then $$\left|\bigcup_{i=1}^{\ell}(G_i\cup\{u_i\})\right|
=\left|\bigcup_{i=1}^{\ell}G_i\right|+\ell
\ge \ell(tk-1)-(\ell-1)k+\ell
=\ell tk-(\ell-1)k,$$
where the inequality uses the corresponding freeness property of $\mathcal G$. 

Finally, every member of $\sigma_k(\mathcal F)$ is either contained in the core $[q]$ or contains the private vertex $u_i$ of one edge $G_i\cup\{u_i\}$. The first type contributes at most $\binom{q}{k}$ members, while for each $i$ the second type contributes at most $\binom{tk-1}{k-1}$ members. Consequently,
$$|\sigma_k(\mathcal F)|\le\binom{q}k+N\binom{tk-1}{k-1},$$
and hence 
\begin{align*}
\frac{|\mathcal F|}{|\sigma_k(\mathcal F)|}\ge\frac{N}{\binom{q}k+N\binom{tk-1}{k-1}}\ge\frac{1}{\epsilon+\binom{tk-1}{k-1}}
\ge(1-\epsilon)\frac{1}{\binom{tk-1}{k-1}},
\end{align*} 
where the last inequality uses $\binom{tk-1}{k-1}\ge1$ and $1/(1+x)\ge1-x$ for $x\ge0$. This proves all three properties for every $m\ge m_0$. 
\end{proof}

We now prove the lower bound in \cref{thm:main-canc}. 
\begin{proof}[Proof of the lower bound in \cref{thm:main-canc}]
We may assume that $0<\epsilon<1$. Apply Lemma~\ref{lem:base-canc} with $\epsilon/2$, fix any $m\ge m_0(t,k,\epsilon/2)$, and let $\mathcal F\subseteq\binom{[m]}{tk}$ be the resulting base hypergraph. Since $\mathcal F$ is nonempty, we have $m\ge tk>k$ and $|\sigma_k(\mathcal F)|\ge\binom{tk}{k}\ge2$, so all hypotheses of Lemma~\ref{lem:conflict-free-matching} are satisfied. 

Using Lemma~\ref{lem:conflict-free-matching} with $\mathcal J=\sigma_k(\mathcal F)$, $e=2t$, and error parameter $\epsilon/2$, for every sufficiently large $n$, there is an induced and $2t$-locally sparse $\sigma_k(\mathcal F)$-packing $\mathcal P$ in $\binom{[n]}k$ such that
\begin{equation}\label{eq:|P|}
|\mathcal P|\ge(1-\epsilon/2)\binom nk\big/|\sigma_k(\mathcal F)|.
\end{equation} 

The freeness of $\mathcal F$ comes from Lemma~\ref{lem:base-canc}(ii), while the local sparsity of $\mathcal P$ comes from Lemma~\ref{lem:conflict-free-matching}(ii). Hence the hypotheses of Lemma~\ref{lem:canc-H(F)} hold, and the hypergraph $\mathcal H(\mathcal F)$ from \cref{construction} is $2(t-1)$-cancellative. Moreover,
\begin{align*}
        |\mathcal{H}(\mathcal{F})|=|\mathcal{P}|\cdot|\mathcal{F}|\ge (1-\epsilon/2)\cdot \frac{\binom{n}{k}}{|\sigma_k(\mathcal{F})|}\cdot |\mathcal{F}|\ge(1-\epsilon/2)^2\frac{\binom nk}{\binom{tk-1}{k-1}}\ge (1-\epsilon)\frac{\binom nk}{\binom{tk-1}{k-1}},
    \end{align*}
    as needed. 
\end{proof}

\subsection{The upper bound}\label{subsec:thm:main-canc-upper}

\noindent The goal of this subsection is to prove the asymptotic upper bound 
$$C_{2(t-1)}(n,tk)\le\binom{n}{k} / \binom{tk-1}{k-1}+O_{t,k}(1),$$ 
which is sufficient for Theorem~\ref{thm:main-canc}. 

We begin by introducing some notation. For a hypergraph $\mathcal{F}$ and $T\subseteq V(\mathcal{F})$, denote by $\deg_{\mathcal{F}}(T)$ the number of edges in $\mathcal{F}$ that contain $T$. Given a subset $X\subseteq V(\mathcal{F})$ and an integer $s\ge 0$, define $\mathcal{D}(\mathcal{F},X,s):=\{T\in\binom{X}{k}:\deg_{\mathcal{F}}(T)=s\}$ and $\mathcal{D}(\mathcal{F},X,\ge s):=\{T\in\binom{X}{k}:\deg_{\mathcal{F}}(T)\ge s\}$. When $X=V(\mathcal{F})$, we simply write $\mathcal{D}(\mathcal{F},s)$ and $\mathcal{D}(\mathcal{F},\ge s)$. With this convention, $|\mathcal{D}(\mathcal F,\ge0)|=\binom nk$.

Let $\mathcal{F}\subseteq\binom{[n]}{tk}$ be a $2(t-1)$-cancellative $tk$-graph. When $|\mathcal F|\ge2t$, we will show that 
\begin{align*}
    \binom{tk}{k}|\mathcal{F}|\le t\binom{n}{k}.
\end{align*}

\noindent To that end, observe that 
\begin{align}\label{eq:canc-upper-1}
    \binom{tk}{k}|\mathcal{F}|=\sum_{s\ge1} s\cdot|\mathcal{D}(\mathcal{F},s)|=\sum_{s\ge1}|\mathcal{D}(\mathcal{F},s)|+\sum_{s\ge2}(s-1)\cdot|\mathcal{D}(\mathcal{F},s)|, 
\end{align} 

\noindent and moreover,
\begin{align}\label{eq:canc-upper-2}
    \sum_{s\ge1}|\mathcal{D}(\mathcal{F},s)|\le \sum_{s\ge0}|\mathcal{D}(\mathcal{F},s)|=|\mathcal{D}(\mathcal{F},\ge 0)|=\binom{n}{k}.
\end{align} 

\noindent Combining \eqref{eq:canc-upper-1} and \eqref{eq:canc-upper-2}, it suffices to show that
\begin{align}\label{eq:canc-upper-3}
    \sum_{s\ge2}(s-1)\cdot|\mathcal{D}(\mathcal{F},s)|\le (t-1)\binom{n}{k}.
\end{align} 

To prove \eqref{eq:canc-upper-3}, we will apply a result on the maximum number of edges in a hypergraph with bounded matching number. Recall that a \emph{matching} in a hypergraph $\mathcal{F}$ is a set of pairwise disjoint edges of $\mathcal{F}$. The \emph{matching number} of $\mathcal{F}$, denoted by $\nu(\mathcal{F})$, is the maximum size of a matching in $\mathcal{F}$. Let $m(n,k,t)$ be the maximum number of edges in an $n$-vertex $k$-graph with matching number at most $t$. Erd\H{o}s \cite{erdos1965problem} posed a well-known conjecture, commonly known as the Erd\H{o}s Matching Conjecture, concerning the exact value of $m(n,k,t)$. This conjecture remains open in general. We refer to \cite{Frankl-Kupavskii-The-Erdos-Matching-Con} for recent progress. For our purpose, we will use the following special case of the Erd\H{o}s Matching Conjecture, proved by  Kleitman \cite{kleitman1968maximal}.

\begin{lemma}[\cite{kleitman1968maximal}, see also \cite{FRANKL80, Frankl-Kupavskii-The-Erdos-Matching-Con}]\label{lem:matching-conjecture}
    $m((t-1)k,k,t-2)=\binom{(t-1)k-1}{k}$ for all positive integers $k\ge2,t\ge 2$.
\end{lemma}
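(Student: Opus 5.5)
The plan is to prove the two inequalities separately. The lower bound comes from an explicit construction, and the upper bound from averaging over random perfect matchings, with no need for Baranyai's theorem. Throughout, set $N:=(t-1)k$, so the ground set is $[N]$ and a perfect matching of $\binom{[N]}{k}$ has exactly $t-1$ blocks.

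\emph{Lower bound.} Fix a vertex $v\in[N]$ and take all $k$-subsets of $[N]\setminus\{v\}$, which gives $\binom{N-1}{k}$ edges. A matching of size $t-1$ would consist of $t-1$ pairwise disjoint $k$-sets. Such sets cover exactly $(t-1)k=N$ vertices, so they would have to contain $v$, which is impossible. Hence the matching number is at most $t-2$. When $t=2$ this family is empty, which is consistent with the required value $\binom{k-1}{k}=0$.

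\emph{Upper bound.} Let $\mathcal G\subseteq\binom{[N]}{k}$ satisfy $\nu(\mathcal G)\le t-2$. Choose a uniformly random ordered partition of $[N]$ into $t-1$ blocks $B_1,\dots,B_{t-1}$, each of size $k$; for example, take a random permutation of $[N]$ and cut it into consecutive segments. By symmetry, each block $B_j$ is a uniformly random $k$-set, so $\Pr[B_j\in\mathcal G]=|\mathcal G|/\binom{N}{k}$. The blocks are pairwise disjoint, so the blocks lying in $\mathcal G$ form a matching in $\mathcal G$, and at most $t-2$ of them can lie in $\mathcal G$. Taking expectations gives
\[(t-1)\frac{|\mathcal G|}{\binom{N}{k}}\le t-2.\]
Therefore $|\mathcal G|\le\frac{t-2}{t-1}\binom{N}{k}$. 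Since $\frac{N-k}{N}=\frac{(t-2)k}{(t-1)k}=\frac{t-2}{t-1}$, this equals $\binom{N}{k}\frac{N-k}{N}=\binom{N-1}{k}$, which proves the upper bound.

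There is no serious obstacle. The only point needing care is the identity $\frac{N-k}{N}\binom{N}{k}=\binom{N-1}{k}$, which is exactly where the divisibility $N=(t-1)k$ is used. This divisibility is what allows the $t-1$ blocks to tile $[N]$ perfectly, and it is the reason the averaging bound is tight.
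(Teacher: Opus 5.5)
Your proof is correct. The paper gives no proof of this lemma: it imports the identity as a black box from Kleitman, with pointers to Frankl and to Frankl--Kupavskii. Your argument has two halves.
\begin{itemize}
\item The family $\binom{[N]\setminus\{v\}}{k}$ has no $t-1$ pairwise disjoint members, since such members would have to cover all $N=(t-1)k$ vertices, including $v$.
\item Averaging over a uniformly random partition of $[N]$ into $t-1$ blocks of size $k$ gives $(t-1)|\mathcal G|/\binom{N}{k}\le t-2$. Hence $|\mathcal G|\le \frac{N-k}{N}\binom{N}{k}=\binom{N-1}{k}$.
\end{itemize}
Together these form the standard elementary proof of the case $n=(s+1)k$ of the Erd\H{o}s Matching Conjecture. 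Including it would make the paper's upper-bound argument self-contained at almost no cost.

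The paper only ever uses the inequality $m((t-1)k,k,t-2)\le\binom{(t-1)k-1}{k}$. It applies it to bound $|\mathcal D(\mathcal F,F_i\setminus T,\ge 2)|$ on the $(t-1)k$-element set $F_i\setminus T$, which yields the lower bound on $|\mathcal D(\mathcal F,F_i\setminus T,1)|$. So your averaging half alone suffices for the application, and your construction is needed only to justify the equality as stated. The citation, for its part, points to literature covering wider parameter ranges than the paper needs.
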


Now we are ready to prove the upper bound in \cref{thm:main-canc}. 

\begin{proof}[Proof of the upper bound in \cref{thm:main-canc}]
    Let $\mathcal{F}\subseteq\binom{[n]}{tk}$ be a $2(t-1)$-cancellative $tk$-graph. If $|\mathcal{F}|<2t$, then $|\mathcal{F}|=O_{t,k}(1)$ and there is nothing to prove for the asymptotic upper bound. Thus we may assume $|\mathcal{F}|\ge 2t$. By the preceding discussion, it suffices to prove \eqref{eq:canc-upper-3}, which in turn yields the desired bound $|\mathcal{F}|\le\binom{n}{k} / \binom{tk-1}{k-1}$.

    Fix $s\ge 2$ and $T\in\mathcal{D}(\mathcal{F},s)$. Let $\{F_1,\ldots,F_s\}$ be the set of edges in $\mathcal{F}$ that contain $T$. For each $i\in[s]$, we consider the family 
    $$\mathcal{D}(\mathcal{F},F_i\setminus T,\ge 2)=\left\{R\in\binom{F_i\setminus T}{k}:\deg_{\mathcal{F}}(R)\ge 2\right\}.$$ The following claim is crucial for the proof.
    \begin{claim}\label{claim:uf-UB}
        There are at least $s-1$ choices of $i\in[s]$ such that $\nu(\mathcal{D}(\mathcal{F},F_i\setminus T,\ge 2))\le t-2$.
    \end{claim}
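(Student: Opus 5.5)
Suppose for contradiction that two distinct indices, say $i=1$ and $i=2$, satisfy $\nu(\mathcal{D}(\mathcal{F},F_1\setminus T,\ge2))\ge t-1$ and $\nu(\mathcal{D}(\mathcal{F},F_2\setminus T,\ge2))\ge t-1$. The plan is to produce $2(t-1)$ edges, distinct from $F_1$ and $F_2$, whose union covers $F_1\Delta F_2$. This contradicts the $2(t-1)$-cancellative property applied with $B=F_1$, $C=F_2$.

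\textbf{Step 1: matchings and witnesses for $F_1$.} Note that $|F_i\setminus T|=(t-1)k$. A matching of size $t-1$ in $\binom{F_1\setminus T}{k}$ is therefore a partition $R_1,\dots,R_{t-1}$ of $F_1\setminus T$ into $k$-sets. Each $R_j$ has $\deg_{\mathcal F}(R_j)\ge2$, so we may pick an edge $A_j\ne F_1$ with $R_j\subseteq A_j$. Then $\bigcup_j A_j\supseteq F_1\setminus T\supseteq F_1\setminus F_2$.

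\textbf{Step 2: witnesses for $F_2$.} Symmetrically, take a partition $S_1,\dots,S_{t-1}$ of $F_2\setminus T$ and edges $A'_j\ne F_2$ with $S_j\subseteq A'_j$. Then $\bigcup_j A'_j\supseteq F_2\setminus F_1$. Together these $2(t-1)$ edges cover $F_1\Delta F_2$.

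\textbf{Step 3: making the edges distinct and avoiding $F_1,F_2$.} This is the main obstacle, and I would handle it as follows.
\begin{itemize}
\item If some $A_j=F_2$, then $R_j\subseteq F_1\cap F_2$. This is allowed, since $R_j$ then lies outside $F_1\Delta F_2$ and needs no covering. So whenever the only choice is $F_2$, I drop $R_j$, and likewise drop $S_j$ when the only choice is $F_1$.
\item Otherwise every chosen witness is an edge different from both $F_1$ and $F_2$.
\item The collection of witnesses may contain repeats. Repeats only reduce the number of distinct edges, so I take the set of distinct witnesses.
\end{itemize}

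\textbf{Step 4: padding to exactly $2(t-1)$ edges.} If the set of distinct witnesses has fewer than $2(t-1)$ edges, I pad it with arbitrary further edges of $\mathcal F$ different from $F_1$ and $F_2$. This is possible since $|\mathcal F|\ge 2t$. Adding edges preserves the covering.

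\textbf{Step 5: contradiction.} We now have $2(t-1)$ distinct edges $A_1,\dots,A_{2t-2}$, all distinct from $F_1$ and $F_2$, whose union contains $F_1\Delta F_2$. Hence
\[
\Bigl(\bigcup A_i\Bigr)\cup F_1=\Bigl(\bigcup A_i\Bigr)\cup F_2,
\]
contradicting $2(t-1)$-cancellativity. Therefore at most one index $i$ has matching number at least $t-1$, which is the claim.

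A point to double-check is whether the paper's cancellativity definition requires exactly $t$ distinct $A_i$. The padding in Step 4 handles this, and the assumption $|\mathcal F|\ge2t$ is exactly what guarantees enough edges for it.
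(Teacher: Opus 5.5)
Your proposal is correct and matches the paper's proof: you partition $F_i\setminus T$ by the size-$(t-1)$ matching, pick witness edges of the degree-$\ge 2$ parts, discard the parts lying in $F_1\cap F_2$, remove repetitions, and pad to exactly $2(t-1)$ edges using $|\mathcal F|\ge 2t$. The only cosmetic difference is that the paper discards every part contained in $F_2$ up front via the index set $J_1=\{j:T_{1,j}\nsubseteq F_2\}$, whereas you discard a part only when $F_2$ is its sole other covering edge; both versions are valid.
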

    Assuming this claim for the moment, let $S=\{i\in[s]:\nu(\mathcal{D}(\mathcal{F},F_i\setminus T,\ge 2))\le t-2\}$. Then $|S|\ge s-1$. For each $i\in S$, \cref{lem:matching-conjecture} yields 
    $$|\mathcal{D}(\mathcal{F},F_i\setminus T,\ge 2)|\le m((t-1)k,k,t-2)=\binom{(t-1)k-1}{k}.$$
    Consequently, for each $i\in S$,
    \begin{align}\label{eq:own-k-subsets}
        |\mathcal{D}(\mathcal{F},F_i\setminus T,1)|\ge\binom{(t-1)k}{k}-\binom{(t-1)k-1}{k}=\binom{(t-1)k-1}{k-1}.
    \end{align}
    
    Define a map \(\sigma:\mathcal{D}(\mathcal{F},\ge 2)\to 2^{\mathcal{D}(\mathcal{F},1)}\) by
    $$\sigma(T):=\{T'\in\mathcal{D}(\mathcal{F},1):\exists F\in \mathcal{F} \text{ such that } T\subseteq F \text{ and } T'\subseteq F\setminus T \}.$$ 
    Since a member of $\mathcal D(\mathcal F,1)$ is contained in a unique edge of $\mathcal F$, the families $\mathcal D(\mathcal F,F_i\setminus T,1)$ are pairwise disjoint. It follows from \eqref{eq:own-k-subsets} that, for each $T\in\mathcal{D}(\mathcal{F},s)$, 
    \begin{align}\label{eq:sigma-T}
        |\sigma(T)|=\sum_{i=1}^s|\mathcal{D}(\mathcal{F},F_i\setminus T,1)|\ge(s-1)\cdot \binom{(t-1)k-1}{k-1}.
    \end{align}
    
    We count the number of pairs $N:=|\{(T,T'):T\in\mathcal{D}(\mathcal{F},\ge2),T'\in\sigma(T)\}|$ in two ways. On the one hand, by \eqref{eq:sigma-T} we have  
    \begin{align}\label{eq:N-lower-bound}
        N=\sum_{T\in\mathcal{D}(\mathcal{F},\ge2)}|\sigma(T)|=\sum_{s\ge2}\sum_{T\in\mathcal{D}(\mathcal{F},s)}|\sigma(T)|\ge \sum_{s\ge2}|\mathcal{D}(\mathcal{F},s)|(s-1)\binom{(t-1)k-1}{k-1}.
    \end{align}
    
    \noindent On the other hand, we may interchange the order of summation. Since each $T'\in \mathcal{D}(\mathcal{F},1)$ lies in exactly one edge $F$, the condition $T'\in \sigma(T)$ implies that $T\subseteq F$ and $T\cap T'=\emptyset$. Hence $T$ must be a $k$-subset of $F\setminus T'$, and therefore there are at most $\binom{(t-1)k}{k}$ possibilities for $T$. Therefore,
    \begin{equation}\label{eq:N-upper-bound}
        \begin{aligned}
        N&=\sum_{T'\in \mathcal{D}(\mathcal{F},1)}|\{T\in \mathcal{D}(\mathcal{F},\ge 2): \exists F\in \mathcal{F} \text{ such that } T\subseteq F \text{ and } T'\subseteq F\setminus T \}|\\
         &\le |\mathcal{D}(\mathcal{F},1)|\cdot\binom{(t-1)k}{k}.
    \end{aligned}
    \end{equation}

    \noindent Combining \eqref{eq:N-lower-bound} and \eqref{eq:N-upper-bound}, we have
    $$\sum_{s\ge2}|\mathcal{D}(\mathcal{F},s)|(s-1)\binom{(t-1)k-1}{k-1}\le |\mathcal{D}(\mathcal{F},1)|\binom{(t-1)k}{k}.$$
    Rearranging, we get 
    $$\sum_{s\ge2}(s-1)\cdot|\mathcal{D}(\mathcal{F},s)|\le (t-1)|\mathcal{D}(\mathcal{F},1)|\le (t-1)\binom{n}{k},$$ as required.
    
    It remains to prove \cref{claim:uf-UB}. Suppose for a contradiction that there are at least two indices $i\in[s]$, say $i=1,2$, for which 
    \begin{align*}
        \nu(\mathcal{D}(\mathcal{F},F_i\setminus T,\ge 2))\ge t-1.
    \end{align*}
    
    \noindent We will derive a contradiction to the assumption that $\mathcal{F}$ is $2(t-1)$-cancellative. 
    
    Since $\nu(\mathcal{D}(\mathcal{F},F_1\setminus T,\ge 2))\ge t-1$, there exist pairwise disjoint sets $T_{1,1},\ldots,T_{1,t-1}\in\mathcal{D}(\mathcal{F},F_1\setminus T,\ge 2)\subseteq\mathcal{D}(\mathcal{F},\ge 2)$. Note that each $T_{1,j}$ has size $k$, and $|F_1\setminus T|=(t-1)k$. Therefore, these sets form a partition of $F_1\setminus T$, and hence $F_1=T\cup T_{1,1}\cup\cdots\cup T_{1,t-1}$. Let 
    $$J_1:=\{j\in[t-1]:T_{1,j}\nsubseteq F_2\}.$$ 
    For each $j\in J_1$, the condition $\deg_{\mathcal F}(T_{1,j})\ge2$ gives an edge $F_{1,j}\in\mathcal F\setminus\{F_1\}$ containing $T_{1,j}$. Because $T_{1,j}\nsubseteq F_2$, this edge is not $F_2$. The sets $T_{1,j}$ with $j\notin J_1$ are already contained in $F_2$, and the sets $T_{1,j}$ partition $F_1\setminus T$.  Hence
    $$F_1\setminus F_2\subseteq\bigcup_{j\in J_1}F_{1,j}.$$
    By symmetry, we obtain a family of at most $t-1$ edges in $\mathcal F\setminus\{F_1,F_2\}$ whose union contains $F_2\setminus F_1$. 
    Combining the two auxiliary families and removing repetitions, we obtain a family of at most $2(t-1)$ distinct edges in $\mathcal F\setminus\{F_1,F_2\}$, whose union contains $F_1\Delta F_2$. Since $|\mathcal F|\ge2t$, this family can be extended, if necessary, to exactly $2(t-1)$ distinct auxiliary edges. Then we obtain two distinct edges $F_1,F_2$ whose symmetric difference is contained in the union of $2(t-1)$ other distinct edges, contradicting the $2(t-1)$-cancellative property. This completes the proof of the claim, and hence the proof of \eqref{eq:canc-upper-3}.  
\end{proof}

\section{Locally $(s,t)$-thin hypergraphs}\label{sec:locally-thin}

\subsection{The lower bound}\label{subsec:hypergraph-lower-bound}
\noindent 
The next lemma gives a sufficient condition for local thinness in terms of two forbidden configurations. 

\begin{lemma}\label{lem:sparse-locally-thin}
    Let $s\ge t\ge1$ and $r\ge2$ be integers, and let $0\le x\le (s-t+1)r-1$ be an integer. If an $r$-graph $\mathcal{H}$ is $((t-1)r+x,s)$-free and $((s-t+1)r-x-1,s-t+1)$-free, then it is locally $(s,t)$-thin. 
\end{lemma}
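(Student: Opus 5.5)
We argue by contradiction. Suppose $\mathcal H$ is $((t-1)r+x,s)$-free and $((s-t+1)r-x-1,s-t+1)$-free but not locally $(s,t)$-thin. Then there are $s$ distinct edges $E_1,\ldots,E_s$ such that at most $t-1$ of them contain a vertex lying in none of the other $s-1$. Call an edge of this collection \emph{private} if it contains such a vertex. Let $P$ be the set of indices of private edges and $Q=[s]\setminus P$. Then $|P|\le t-1$ and $|Q|\ge s-t+1$. Every vertex of an edge $E_j$ with $j\in Q$ lies in some other $E_i$, $i\ne j$. We fix a subset $Q'\subseteq Q$ of exactly $s-t+1$ indices and derive a contradiction from the two freeness conditions, splitting according to the size of $U:=\bigcup_{j\in Q'}E_j$.

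\textbf{Case 1: $|U|\le (s-t+1)r-x-1$.} The $s-t+1$ distinct edges $\{E_j:j\in Q'\}$ form a $((s-t+1)r-x-1,s-t+1)$-configuration. This contradicts the second freeness hypothesis.

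\textbf{Case 2: $|U|\ge (s-t+1)r-x$.} We bound the size of the whole union $W:=\bigcup_{i=1}^s E_i$. Write $W$ as the union of $U$ and the sets $E_i\setminus U$ for $i\notin Q'$; there are exactly $t-1$ indices outside $Q'$. Each of these sets has size at most $r$, so their union has size at most $(t-1)r$. The difficulty is that this only gives $|W|\le |U|+(t-1)r$, which is too weak. We have to exploit that the vertices of the edges $E_j$, $j\in Q'$, are not private, together with the lower bound on $|U|$.

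The right accounting is to start from the non-$Q'$ edges instead. Let $A:=\bigcup_{i\notin Q'}E_i$, so $|A|\le (t-1)r$. Every vertex of $U\setminus A$ lies in some $E_j$ with $j\in Q'$ and, being non-private, also in a second edge, which must then also be indexed by $Q'$. So each vertex of $U\setminus A$ is covered at least twice by the $Q'$-edges. Let $a:=|U\setminus A|$ and $b:=|U\cap A|$, so $a+b=|U|$. The total incidence count of the $Q'$-edges then gives
\[
(s-t+1)r=\sum_{j\in Q'}|E_j|\ge 2a+b=a+|U|.
\]
Hence $a\le (s-t+1)r-|U|\le x$, and therefore $|W|=|A|+a\le (t-1)r+x$. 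The $s$ distinct edges $E_1,\ldots,E_s$ thus form a $((t-1)r+x,s)$-configuration, contradicting the first freeness hypothesis.

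The main obstacle is the double-coverage step in Case 2. We need every vertex of $U\setminus A$ to be covered twice within $Q'$. This holds because such a vertex lies in some $E_j$ with $j\in Q'$, and non-privacy of $E_j$ gives a second edge containing it. That edge cannot be indexed outside $Q'$, since then the vertex would lie in $A$. I would check this point carefully and also the edge case $t=1$, where $A=\emptyset$; the argument goes through unchanged there.
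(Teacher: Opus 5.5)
Your proof is correct and is essentially the paper's argument. Like the paper, you pick $s-t+1$ edges with no private vertex and use the second freeness condition to get $|U|\ge (s-t+1)r-x$. The incidence count $(s-t+1)r\ge |U|+|U\setminus A|$ then forces $|U\setminus A|\le x$, so $E_1,\ldots,E_s$ form a $((t-1)r+x,s)$-configuration. The paper runs the same count through the degree-one and degree-$\ge 2$ vertex sets $V_1\subseteq X$ and $V_{\ge 2}$ of the sub-hypergraph.
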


\begin{proof}
  Suppose, for contradiction, that $A_1,\dots,A_s\subseteq\mathcal{H}$ violate local $(s,t)$-thinness. 
  Choose $s-t+1$ of these edges that have no vertex of degree one in the $s$-edge hypergraph, and relabel them as $A_t,\cdots,A_s$. Let 
  $$X=\bigcup_{i=1}^{t-1} A_i,\qquad Y=\bigcup_{j=t}^s A_j.$$
  In the subhypergraph $\{A_t,\dots,A_s\}$, let $V_1$ be the set of vertices of degree exactly one, and $V_{\ge 2}$ be the set of vertices of degree at least two. Every vertex of $V_1$ lies in $X$, because none of these $s-t+1$ edges has a degree-one vertex in the full $s$-edge hypergraph. The second freeness condition gives $|Y|\ge (s-t+1)r-x$, hence  
  \begin{align}\label{eq:double-count}
   (s-t+1)r=\sum_{u\in Y}|\{t\le i\le s: u\in A_i \}|\ge |V_1|+2|V_{\ge 2}|=|Y|+|V_{\ge 2}|.
  \end{align}
  Thus $|V_{\ge 2}|\le x$, and 
  $$\big|\cup_{i=1}^sA_i\big|=|X|+|Y\setminus X|\le (t-1)r+|V_{\ge 2}|\le (t-1)r+x.$$
  This contradicts the assumption that $\mathcal{H}$ is $((t-1)r+x,s)$-free. Therefore $\mathcal{H}$ is locally $(s,t)$-thin.
\end{proof}

We deduce Theorem \ref{thm:locally-thin-lower} from Lemma \ref{lem:sparse-locally-thin}. 

\begin{proof}[Proof of \cref{thm:locally-thin-lower}]
    If $s=2$, then $t=1$ and $h=r$. Every $r$-graph is locally $(2,1)$-thin, so $L_{(2,1)}(n,r)=\binom nr=\Theta(n^r)$. We may therefore assume that $s\ge3$. 
    
    Choose an admissible integer $x$ attaining the maximum that defines $h$. Consider the two freeness conditions in Lemma~\ref{lem:sparse-locally-thin}: 
    $$\bigl((t-1)r+x,s\bigr)\text{-freeness}\quad\text{and}\quad\bigl((s-t+1)r-x-1,s-t+1\bigr)\text{-freeness}. $$
    Since
    $$\bigl((t-1)r+x\bigr)+\bigl((s-t+1)r-x-1\bigr)=sr-1>2r,$$
    at least one of the two vertex bounds exceeds $r$. Any condition whose vertex bound is at most $r$ holds automatically, as two distinct $r$-edges have a union of size at least $r+1$, and may be omitted. For each remaining condition, the exponent in Lemma~\ref{lem:e^--free} is one of the two terms defining $h$, and hence is at least $h$. Thus Lemma~2.1 yields an $n$-vertex $r$-graph $\Omega(n^{h})$ edges satisfying both freeness conditions. By Lemma~\ref{lem:sparse-locally-thin}, this hypergraph is locally $(s,t)$-thin, proving the result. This completes the proof. 
\end{proof}

\subsection{The upper bound}\label{subsec:hypergraph-upper-bound}
\noindent We first remove edges of degree either zero or at least $s$.  

\begin{lemma}\label{lem:codegree-lower-bound}
    Let $s\ge 1$ and $1\le k\le r$ be integers, and let $\mathcal{H}$ be an $r$-graph on $n$ vertices. Then there exists a subhypergraph $\mathcal{H}'\subseteq \mathcal{H}$ with at least $\max\left\{|\mathcal{H}|-(s-1)\binom{n}{k},\,0\right\}$ edges such that every $k$-set of vertices is contained in either no edge or at least $s$ edges of $\mathcal{H}'$. 
\end{lemma}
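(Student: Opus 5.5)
We run a greedy deletion procedure. Set $\mathcal H_0:=\mathcal H$. At step $j$, suppose some $k$-set $T$ has $1\le \deg_{\mathcal H_j}(T)\le s-1$. Delete all edges of $\mathcal H_j$ containing $T$, producing $\mathcal H_{j+1}$. After this step $\deg_{\mathcal H_{j+1}}(T)=0$. When no such $T$ remains, we stop and output $\mathcal H'$. By construction, every $k$-set then lies in either no edge or at least $s$ edges of $\mathcal H'$.

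To bound the number of deleted edges, the key observation is that degrees of $k$-sets only decrease under edge deletion. Once $T$ has been processed its degree is $0$, and it stays $0$ for the rest of the procedure. So $T$ never again satisfies $1\le\deg(T)\le s-1$, and each $k$-set is processed at most once. The procedure therefore terminates after at most $\binom nk$ steps. Each step removes at most $s-1$ edges, so at most $(s-1)\binom nk$ edges are removed in total. This gives $|\mathcal H'|\ge|\mathcal H|-(s-1)\binom nk$, and trivially $|\mathcal H'|\ge0$.

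If $\mathcal H'$ turns out to be empty, the conclusion still holds vacuously, since every $k$-set lies in no edge. The only point needing care is the monotonicity argument, namely that a processed $k$-set cannot re-enter the bad range. This is immediate because degrees never increase under deletion, so there is no real obstacle here.
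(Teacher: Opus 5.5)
Your proof is correct and follows essentially the same argument as the paper: the same greedy deletion of all edges through a $k$-set whose degree lies between $1$ and $s-1$, with each step removing at most $s-1$ edges. Your explicit observation that a processed $k$-set keeps degree $0$ from then on, so each $k$-set triggers deletion at most once, supplies the step the paper leaves implicit when it bounds the total by the $\binom{n}{k}$ possible $k$-sets.
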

\begin{proof}
Begin with $\mathcal{H}$ and iteratively delete edges as follows. As long as there exists a $k$-set that is contained in $\{1,2,\ldots,s-1\}$ edges of the current hypergraph, delete all edges containing that $k$-set.
Since the number of edges decreases at each step, the process must terminate. Let $\mathcal{H}'$ denote the resulting subhypergraph. By construction, no $k$-set is contained in a positive number smaller than $s$ edges of $\mathcal{H}'$. Therefore, every $k$-set of vertices is contained in either no edge or at least $s$ edges of $\mathcal{H}'$.

It remains to estimate the number of edges removed during this procedure. Whenever a $k$-set $S$ triggers deletions, the number of edges containing $S$ at that moment is at most $s-1$. Hence at most $s-1$ edges are deleted. Since there are at most $\binom{n}{k}$ distinct $k$-sets, the total number of deleted edges is at most $(s-1)\binom{n}{k}$. Therefore, $$|\mathcal{H}'|\ge |\mathcal{H}|-(s-1)\binom{n}{k},$$ which completes the proof. 
\end{proof}

Now we turn to the proof of \cref{thm:locally-thin-upper}.

\begin{proof}[Proof of \cref{thm:locally-thin-upper}]
 Let $\mathcal{H}\subseteq\binom{[n]}{r}$ be a locally $(s,t)$-thin $r$-graph. Suppose, for contradiction, that $|\mathcal{H}|\ge (s-1)\binom{n}{\ell}+s$. Then, by \cref{lem:codegree-lower-bound}, there exists a subhypergraph $\mathcal{H}'\subseteq\mathcal{H}$ with $|\mathcal{H}'|\ge s$ such that for every $\ell$-set $T\subseteq[n]$, either $\deg_{\mathcal{H}'}(T)=0$ or $\deg_{\mathcal{H}'}(T)\ge s$. Since locally $(s,t)$-thinness is inherited by subhypergraphs, it suffices to show that $\mathcal{H}'$ is not locally $(s,t)$-thin. Equivalently, we shall find $s$ distinct edges of $\mathcal{H}'$ such that at most $t-1$ of them contain a vertex of degree one in the $s$-edge subhypergraph.
 
 The cases $t=1$ and $t=2$ are immediate. Indeed, in both cases $\left\lfloor \frac{2s-t-1}{s-t+1}\right\rfloor=1$, and hence $\ell=r$; the asserted upper bound then follows from $L_{(s,t)}(n,r)\le\binom{n}{r}$. We may therefore assume that $t\ge3$.
 
 \textbf{Case 1: $r\ge 2\ell$. } Choose an arbitrary edge $B_1\in\mathcal{H}'$ and an $\ell$-subset $T_1\subseteq B_1$. Since $\deg_{\mathcal{H}'}(T_1)\ge s$, we can choose an edge $B_2\in\mathcal{H}'\setminus\{B_1\}$ containing $T_1$. If $s=t+1$, we stop here. If $s\ge t+2$, we continue inductively as follows. Suppose that, for some $2\le i\le s-t$, we have chosen distinct edges $B_1,\ldots,B_i$ and $\ell$-sets $T_1,\ldots,T_{i-1}$ such that
 $$T_j\subseteq B_j\cap B_{j+1}\quad(1\le j\le i-1),\quad T_{j-1}\cap T_j=\emptyset\quad(2\le j\le i-1).$$
 Since $r\ge2\ell$, we may choose an $\ell$-set $T_i\subseteq B_i$ disjoint from $T_{i-1}$. Since $\deg_{\mathcal{H}'}(T_i)\ge s$ and $i\le s-t<s$, fewer than $s$ previously chosen edges are forbidden. Then we can choose a new edge $B_{i+1}\in\mathcal{H}'\setminus\{B_1,\ldots,B_i\}$ containing $T_i$. After $s-t$ steps we have distinct edges $B_1,\ldots,B_{s-t+1}$ and $\ell$-sets $T_1,\ldots,T_{s-t}$ such that
 $$T_i\subseteq B_i\cap B_{i+1}\quad(1\le i\le s-t),\quad T_{i-1}\cap T_i=\emptyset\quad(2\le i\le s-t).$$

 Let $\mathcal{B}=\{B_1,\ldots,B_{s-t+1}\}$. For each $i$, let $C_i$ be the set of vertices of $B_i$ that have degree one in $\mathcal{B}$. Then
 $$C_1\subseteq B_1\setminus T_1,
 \quad C_{s-t+1}\subseteq B_{s-t+1}\setminus T_{s-t},
 \quad C_i\subseteq B_i\setminus(T_{i-1}\cup T_i)\quad(2\le i\le s-t).$$
 Therefore $|C_1|,|C_{s-t+1}|\le r-\ell$, while $|C_i|\le r-2\ell$ for $2\le i\le s-t$.
 Thus $C_1$ and $C_{s-t+1}$ can each be covered by at most $(\lceil r/\ell\rceil-1)$ $\ell$-subsets of $B_1$ and $B_{s-t+1}$, respectively. Similarly, each $C_i$ with $2\le i\le s-t$ can be covered by at most $(\lceil r/\ell\rceil-2)$ $\ell$-subsets of $B_i$. Let $\mathcal{T}$ be the collection of all $\ell$-sets used in these covers. For each $T\in\mathcal{T}$, choose an auxiliary edge $A_T\in\mathcal{H}'$ with $T\subseteq A_T$, greedily and distinctly from all previously chosen edges. This is possible because $\deg_{\mathcal{H}'}(T)\ge s$, and the total number of edges we select is at most
 \begin{align*}
 (s-t+1)+2\left(\left\lc\fr{r}{\ell}\right\rc-1\right)+(s-t-1)\left(\left\lc\fr{r}{\ell}\right\rc-2\right)=(s-t+1)\left\lc\fr{r}{\ell}\right\rc-(s-t-1)\le s.
 \end{align*}
 The last inequality follows from the choice of $\ell$, which gives $$\left\lc\fr{r}{\ell}\right\rc\le \left\lf\fr{2s-t-1}{s-t+1}\right\rf\le \fr{2s-t-1}{s-t+1}.$$ 
 
 Let $\mathcal{F}$ be the subhypergraph consisting of $B_1,\ldots,B_{s-t+1}$ and all auxiliary edges $A_T$. Then $|\mathcal{F}|\le s$. By construction, none of the edges $B_1,\ldots,B_{s-t+1}$ contains a vertex of degree one in $\mathcal{F}$. If $|\mathcal{F}|<s$, add arbitrary edges from $\mathcal{H}'\setminus\mathcal{F}$ until obtaining an $s$-edge subhypergraph. Since $B_1,\ldots,B_{s-t+1}$ still contain no vertex of degree one, this $s$-edge subhypergraph has at most $t-1$ edges containing a vertex of degree one, contradicting local $(s,t)$-thinness.

 \textbf{Case 2: $r<2\ell$.}~ Again choose $B_1\in\mathcal{H}'$ and an $\ell$-subset $T_1\subseteq B_1$, and then choose a distinct edge $B_2$ containing $T_1$. If $s=t+1$, stop. If $s\ge t+2$, continue as follows. Suppose that $B_1,\ldots,B_i$ and $T_1,\ldots,T_{i-1}$ have been chosen for some $2\le i\le s-t$. Since $T_{i-1}\subseteq B_i$ and $r<2\ell$, we have $|B_i\setminus T_{i-1}|=r-\ell<\ell$. Choose an $\ell$-set $T_i\subseteq B_i$ such that $B_i\setminus T_{i-1}\subseteq T_i$. Note that $\deg_{\mathcal{H}'}(T_i)\ge s$ and $i\le s-t<s$. We may choose a new edge $B_{i+1}$ containing $T_i$. Thus we obtain distinct edges $B_1,\ldots,B_{s-t+1}$ such that
 $$B_i\subseteq T_{i-1}\cup T_i\subseteq B_{i-1}\cup B_{i+1}\qquad(2\le i\le s-t).$$
 Hence none of the internal edges $B_2,\ldots,B_{s-t}$ has a vertex of degree one in $\{B_1,\ldots,B_{s-t+1}\}$. Let $C_1$ and $C_{s-t+1}$ be the sets of degree-one vertices in the edges $B_1$ and $B_{s-t+1}$, respectively. We have
 $$C_1\subseteq B_1\setminus T_1,
 \quad C_{s-t+1}\subseteq B_{s-t+1}\setminus T_{s-t},$$
 so $|C_1|,|C_{s-t+1}|<\ell$. Choose $\ell$-sets $S_1\subseteq B_1$ and $S_{s-t+1}\subseteq B_{s-t+1}$ with $C_1\subseteq S_1$ and $C_{s-t+1}\subseteq S_{s-t+1}$. By the same greedy argument, choose distinct edges $A_1,A_{s-t+1}\in\mathcal{H}'\setminus\{B_1,\ldots,B_{s-t+1}\}$ with $S_1\subseteq A_1$ and $S_{s-t+1}\subseteq A_{s-t+1}$, also requiring $A_1\neq A_{s-t+1}$. The resulting subhypergraph $\{B_1,\ldots,B_{s-t+1},A_1, A_{s-t+1}\}$ has $s-t+3\le s$ edges, and none of $B_1,\ldots,B_{s-t+1}$ has a vertex of degree one. Extending it to an $s$-edge subhypergraph of $\mathcal{H}'$ again contradicts local $(s,t)$-thinness. This contradiction proves the desired upper bound.
\end{proof}

Next we complete the proof of Corollary~\ref{cor:locally-thin}. 
\begin{proof}[Proof of Corollary~\ref{cor:locally-thin}]
The assumption gives $\ell=r/d$ in Theorem~\ref{thm:locally-thin-upper}, so $L_{(s,t)}(n,r)=O(n^{r/d})$. For the matching lower bound, choose $x:=(s-t)r/d$.
This is a nonnegative integer, and $(s-t+1)r-x=(2s-t-1)r/d-(s-t)r/d=(s-1)r/d\ge 1.$
Thus $0\le x\le(s-t+1)r-1$, so this value of $x$ is admissible in
Theorem~\ref{thm:locally-thin-lower}. For this choice, $\frac{(s-t+1)r-x}{s-1}=r/d$ and $\frac{x+1}{s-t}=r/d+\frac{1}{s-t}>r/d$. 
It follows that the exponent $h$ in Theorem~\ref{thm:locally-thin-lower} satisfies $h\ge r/d$, and hence
$L_{(s,t)}(n,r)=\Omega(n^{r/d})$. Together with the upper bound, this gives
$$L_{(s,t)}(n,r)=\Theta(n^{r/d})=\Theta\!\left(n^{(s-t+1)r/(2s-t-1)}\right),$$
as required. 
\end{proof}

\section*{Acknowledgements}
\noindent We would like to thank Mengfei Tang for helpful discussions at the early stage of this project. M. Liu, C. Shangguan, and C. Zhang are supported by the National Natural Science Foundation of China under Grant Nos. 12571352 and 12231014, and the Fundamental Research Funds for the Central Universities. 

All mathematical ideas and proofs presented in this paper are developed and written by the authors. ChatGPT-5.6 Sol is used to improve the language and presentation of the manuscript and to check the completeness and consistency of the references. 

{\small
\normalem
\bibliographystyle{plain}
\bibliography{ref}
}

\end{document}